\theoremstyle{plain}
\newtheorem{theorem}{Theorem}[section]
\newtheorem{corollary}[theorem]{Corollary}
\newtheorem{lemma}[theorem]{Lemma}
\newtheorem{prop}[theorem]{Proposition}
\theoremstyle{definition}
\theoremstyle{remark}
\newcommand{\nri}{n\rightarrow\infty}
\newcommand{\bbR}{\mathbb{R}}
\newcommand{\bbC}{\mathbb{C}}
\newcommand{\bbD}{\mathbb{D}}
\newcommand{\bbJ}{\mathbb{J}}
\newcommand{\bbN}{\mathbb{N}}
\newcommand{\sinc}{\mathrm{sinc}}
\newcommand{\kri}{k\rightarrow\infty}
\newcommand{\eitheta}{e^{i\theta}}
\DeclareMathOperator*{\supp}{supp}
\DeclareMathOperator*{\Real}{Re}
\title[]{Applications of a New Formula for OPUC with Periodic Verblunsky Coefficients}
\author[]{Brian Simanek}
\date{}
\begin{document}
\maketitle

\begin{abstract}
We find a new formula for the orthonormal polynomials corresponding to a measure $\mu$ on the unit circle whose Verblunsky coefficients are periodic.  The formula is presented using the Chebyshev polynomials of the second kind and the discriminant of the periodic sequence.  We present several applications including a resolution of a problem suggested by Simon in 2006 regarding the existence of singular points in the bands of the support of the measure and a universality result at all points of the essential support of $\mu$.
\end{abstract}

\vspace{4mm}

\footnotesize\noindent\textbf{Keywords:} Periodic Verblunsky Coefficients, Chebyshev Polynomials, Universality

\vspace{2mm}

\noindent\textbf{Mathematics Subject Classification:} Primary 42C05; Secondary 33C45

\vspace{2mm}

\normalsize

\section{Introduction}\label{intro}

Let $\mu$ be a probability measure on the unit circle with infinite and compact support.  If $\{\Phi_n\}_{n=0}^{\infty}$ denotes the sequence of monic orthogonal polynomials obtained by applying Gram-Schmidt orthogonalization to the sequence $\{z^n\}_{n=0}^{\infty}$, then we define the sequence of Verblunsky coefficients $\{\alpha_n\}_{n=0}^{\infty}$ by $-\bar{\alpha}_n=\Phi_{n+1}(0)$.  The well-known Szeg\H{o}-recursion states
\begin{align*}
\Phi_{n+1}(z)&=z\Phi_n(z)-\bar{\alpha}_{n}\Phi_n^*(z)\\
\Phi_{n+1}^*(z)&=\Phi_n^*(z)-\alpha_nz\Phi_n(z),
\end{align*}
where $\Phi_n^*(z)=z^n\overline{\Phi_n(1/\bar{z})}$.  The second kind polynomials $\{\Psi_n(z)\}_{n=0}^{\infty}$ are obtained by iterating the Szeg\H{o} recursion using the sequence $\{-\alpha_n\}_{n=0}^{\infty}$ and the initial condition $\Psi_0(z)=1$.  The second kind polynomials are also orthogonal with respect to a different measure on the unit circle, which is in the family of Aleksandrov measures for the measure $\mu$.  Let $\{\varphi_n\}_{n=0}^{\infty}$ denote the sequence of orthonormal polynomials for the measure $\mu$ and let $\{\psi_n\}_{n=0}^{\infty}$ denote the sequence of normalized second kind polynomials.

Verblunsky's Theorem (see \cite[Section 1.7]{OPUC1}) establishes a bijection between sequences $\{\alpha_n\}_{n=0}^{\infty}$ in $\bbD$ and probability measures on $\partial\bbD=\{z:|z|=1\}$ with infinite support.  Therefore it is meaningful to study measures $\mu$ whose corresponding sequence satisfies certain properties.  In this paper, we will focus on the case when the sequence $\{\alpha_n\}_{n=0}^{\infty}$ is periodic with period $p$.  Polynomials $\{\Phi_n\}_{n=0}^{\infty}$ generated by periodic sequences $\{\alpha_n\}_{n=0}^{\infty}$ have been studied extensively and much is known about their structure, asymptotics, and zero behavior (see for example  \cite{Luk, PehSt,PehSt97, PehSt2,PehSt2000,PehSt2000a,Simon3}).  However, we will introduce a new formula for these poiynomials that allows us to carry out some complicated calculations and produce several new results.  The most substantial of our results are a demonstration of the existence of singular points (defined below) in the support of the measure $\mu$ and also a universality result for the scaled polynomial reproducing kernels around any point in the essential support of $\mu$.

A fundamental tool in the analysis of polynomials generated by periodic sequences of Verblunsky coefficients is the function $\Delta(z)$ defined by
\begin{align}\label{deltadef}
\Delta(z)&:=\frac{\varphi_p(z)+\varphi_p^*(z)+\psi_p(z)+\psi_p^*(z)}{2z^{p/2}},
\end{align}
which we call the discriminant (as in \cite[Section 11]{OPUC2}).  Due to the presence of the factor $z^{p/2}$, it will sometimes be convenient to assume that $p$ is even.  Note that there is no loss of generality in making this assumption because every sequence of period $p$ is also a sequence of period $2p$.  The discriminant is especially useful in describing the support of the corresponding measure.  By \cite[Theorems 11.1.1 and 11.1.2]{OPUC2}, we know that if $\mu$ is a measure on the unit circle whose Verblunsky coefficients are periodic with period $p$, then the support of the measure $\mu$ consists of disjoint closed arcs $\{B_k\}_{k=1}^q$ (where $q\leq p$) and at most one point in each gap between the arcs.  These arcs (called \textit{bands}) satisfy the relation
\[
\bigcup_{k=1}^qB_k=\{\eitheta:|\Delta(\eitheta)|\leq2\}
\]
On each band, the measure $\mu$ is purely absolutely continuous with respect to arclength measure.  The set $\partial\bbD\setminus\cup B_k$ is called the set of \textit{gaps} in $\supp(\mu_{ac})$.

Let us denote the endpoints of $B_k$ by $e^{ix_k}$ and $e^{iy_k}$, where $0\leq x_1<y_1<x_2<y_2<\cdots<y_q<x_1+2\pi$.  When $p$ is even, it holds that
\[
\{e^{ix_j},e^{iy_j}:j=1,\ldots,q\}\subseteq\Delta^{-1}(\{-2,2\}),
\]
but this inclusion could be strict.  Any point $\eitheta$ for which $|\Delta(\eitheta)|=2$ but $\eitheta$ is not the endpoint of a band is called a \textit{closed gap}.

We can also use $\Delta$ to derive a formula for the equilibrium measure of $\supp(\mu)$.  Indeed, it is known that the equilibrium measure is given by\footnote{Here and always, we identify the unit circle with the interval $[0,2\pi)$.}
\begin{equation}\label{vdef}
V(\theta)\frac{d\theta}{2\pi}=\frac{|W(\theta)|}{p\sqrt{4-\Delta(\eitheta)^2}}\frac{d\theta}{2\pi},
\end{equation}
where
\begin{equation}\label{wdef}
W(s):=\frac{d}{ds}\Delta(e^{is})
\end{equation}
(see \cite[Theorem 11.1.3]{OPUC2} and Remark 1 following it).  These formulas for $V$ and $W$ will be important when stating our results in Section \ref{universality}.

As in \cite{SimGeron}, our results will be made possible by a new formula for the orthonormal polynomials $\{\varphi_n\}_{n=0}^{\infty}$ in terms of the Chebyshev polynomials of the second kind.  These polynomials have many closed form expressions, but the one that will be most useful for us is

\begin{equation}\label{chebform1}
U_n(x)=\sum_{j=0}^{\left\lfloor\frac{n}{2}\right\rfloor}(-1)^j\binom{n-j}{j}(2x)^{n-2k}
\end{equation}
(see \cite[page 37]{BE}).  We also recall from \cite[page 37]{BE} the formulas
\begin{equation}\label{chebform2}
U_n(\cos(t))=\frac{\sin((n+1)t)}{\sin(t)}
\end{equation}
\begin{equation}\label{chebform3}
U_n(x)=\frac{(x+\sqrt{x^2-1})^{n+1}-(x-\sqrt{x^2-1})^{n+1}}{2\sqrt{x^2-1}}
\end{equation}
The detailed knowledge we have about the Chebyshev polynomials will enable us to deduce equally precise results for the orthonormal polynomials when the Verblunsky coefficients are periodic.

\smallskip

In the next section, we will present our new formula for $\varphi_n$ in Theorem \ref{closed}.  The proof is short and is similar to that of a related formula that appears in \cite{SimGeron}.  After deriving these formulas, the remainder of the paper is devoted to applications.

\section{Periodic Verblunsky Coefficients}\label{periodic}

Let us fix $p\in\bbN$ and consider a periodic sequence of Verblunsky coefficients $\{\alpha_n\}_{n=0}^{\infty}$ that satisfies $\alpha_{n+p}=\alpha_n$ for all $n\geq0$.  Let $\{\varphi_n\}_{n=0}^{\infty}$ be the corresponding sequence of orthonormal polynomials and let $\{\psi_n\}_{n=0}^{\infty}$ be the corresponding sequence of normalized second kind polynomials.  For convenience, we define
\begin{align*}
r&:=\prod_{j=0}^{p-1}\sqrt{1-|\alpha_j|^2}\\
\eta(z;\sigma)&:=\sigma(\varphi_p(z)-\varphi^*_p(z))-\psi_p(z)-\psi_p^*(z),
\end{align*}
which we will retain for the remainder of this paper.  Our first result will make all of our subsequent results possible and is given by the following theorem.

\begin{theorem}\label{closed}
For any $k\in\bbN_0$ and $s\in\{1,\ldots,p-1\}$, it holds that
\begin{align*}
\varphi_{kp}(z)&=z^{kp/2}\bigg[U_k\left(\frac{\Delta(z)}{2}\right)+\frac{\eta(z;1)}{2z^{p/2}}U_{k-1}\left(\frac{\Delta(z)}{2}\right)\bigg]\\
\varphi_{kp}^*(z)&=z^{kp/2}\bigg[U_k\left(\frac{\Delta(z)}{2}\right)+\frac{\eta(z;-1)}{2z^{p/2}}U_{k-1}\left(\frac{\Delta(z)}{2}\right)\bigg]\\
\varphi_{kp+s}(z)&=\frac{(\varphi_{s}(z)+\psi_s(z))\varphi_{kp}(z)+(\varphi_s(z)-\psi_s(z))\varphi_{kp}^*(z)}{2}\\
\varphi_{kp+s}^*(z)&=\frac{(\varphi_s^*(z)-\psi_s^*(z))\varphi_{kp}(z)+(\varphi_s^*(z)+\psi_s^*(z))\varphi_{kp}^*(z)}{2}
\end{align*}
where $U_{-1}=0$.
\end{theorem}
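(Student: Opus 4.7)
The plan is to reduce the whole theorem to the transfer matrix formalism and a single Cayley--Hamilton computation. Writing the orthonormal Szeg\H{o} recursion in matrix form
\[
\binom{\varphi_{n+1}(z)}{\varphi_{n+1}^*(z)}=T_n(z)\binom{\varphi_n(z)}{\varphi_n^*(z)},\qquad T_n(z):=\frac{1}{\sqrt{1-|\alpha_n|^2}}\begin{pmatrix} z & -\bar\alpha_n\\ -\alpha_n z & 1\end{pmatrix},
\]
one checks that $\det T_n(z)=z$. A direct manipulation of the recursions for $\varphi_n$ (driven by $\alpha_n$) and for $\psi_n$ (driven by $-\alpha_n$) shows that the two vectors
\[
\Bigl(\tfrac12(\varphi_n+\psi_n),\,\tfrac12(\varphi_n^*-\psi_n^*)\Bigr)^{T}\quad\text{and}\quad \Bigl(\tfrac12(\varphi_n-\psi_n),\,\tfrac12(\varphi_n^*+\psi_n^*)\Bigr)^{T}
\]
satisfy the same recursion as $(\varphi_n,\varphi_n^*)^T$, but with initial data $(1,0)^T$ and $(0,1)^T$ respectively. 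Reading off the two columns of the composite transfer matrix $A_n(z):=T_{n-1}(z)\cdots T_0(z)$ identifies it in closed form as
\[
A_n(z)=\frac{1}{2}\begin{pmatrix} \varphi_n+\psi_n & \varphi_n-\psi_n\\ \varphi_n^*-\psi_n^* & \varphi_n^*+\psi_n^*\end{pmatrix}.
\]
Hence $\det A_p(z)=z^p$ and $\Tr A_p(z)=z^{p/2}\Delta(z)$.

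Now I would exploit the periodicity hypothesis: $A_{kp}(z)=A_p(z)^k$ and $A_{kp+s}(z)=A_s(z)A_p(z)^k$. Setting $\tilde T(z):=A_p(z)/z^{p/2}$ produces a matrix with determinant $1$ and trace $\Delta(z)$, so by Cayley--Hamilton $\tilde T^2=\Delta\tilde T-I$, and the standard induction gives
\[
\tilde T(z)^k=U_{k-1}\!\bigl(\tfrac{\Delta(z)}{2}\bigr)\tilde T(z)-U_{k-2}\!\bigl(\tfrac{\Delta(z)}{2}\bigr)I\qquad(k\ge 1),
\]
using $U_{-1}=0$. Multiplying through by $z^{kp/2}$ and applying to $(1,1)^T$, with $A_p(z)(1,1)^T=(\varphi_p,\varphi_p^*)^T$, produces two-term expressions for $\varphi_{kp}$ and $\varphi_{kp}^*$ of the shape $\alpha\, U_{k-1}-\beta\, U_{k-2}$. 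One further use of the Chebyshev recurrence $U_k(x)+U_{k-2}(x)=2xU_{k-1}(x)$ with $x=\Delta(z)/2$ converts these into the form stated in the theorem, provided the algebraic identities
\[
\varphi_p(z)-\tfrac12\eta(z;1)=z^{p/2}\Delta(z),\qquad \varphi_p^*(z)-\tfrac12\eta(z;-1)=z^{p/2}\Delta(z)
\]
hold; but both are immediate from the definitions of $\eta(z;\sigma)$ and $\Delta(z)$. The base case $k=0$ is handled directly from $U_0=1$, $U_{-1}=0$.

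For $s\in\{1,\dots,p-1\}$, since $A_{kp+s}(z)=A_s(z)A_p(z)^k$, applying both sides to $(1,1)^T$ gives $(\varphi_{kp+s},\varphi_{kp+s}^*)^T=A_s(z)(\varphi_{kp},\varphi_{kp}^*)^T$, and performing this single matrix--vector product with the closed form of $A_s(z)$ obtained above reproduces the last two formulas of the theorem line by line.

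The step I expect to require the most care is identifying the correct linear combinations of $\varphi_n,\psi_n,\varphi_n^*,\psi_n^*$ that respect the transfer matrix recursion and keeping the sign flip $\alpha_n\mapsto-\alpha_n$ for $\psi_n$ straight; this is precisely what forces the ``crossed'' off-diagonal sign pattern in $A_p(z)$ (specifically, the pairing of $\varphi_n\pm\psi_n$ with $\varphi_n^*\mp\psi_n^*$). Once $A_p(z)$ is in hand together with its trace and determinant, everything else is Chebyshev bookkeeping driven by Cayley--Hamilton.
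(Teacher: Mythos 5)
Your proof is correct and follows essentially the same approach as the paper: organize the Szeg\H{o} recursion into a transfer matrix, identify the $k$th power of the monodromy matrix with Chebyshev polynomials via its trace and (unit) determinant, and read off the four formulas from the matrix--vector product. The paper cites \cite[Eq.\ 3.2.17]{OPUC1} for the closed form of the composite transfer matrix and \cite[Theorem 1]{McL} for the matrix-power-to-Chebyshev conversion, whereas you re-derive both ingredients (the $A_n$ identity by checking the recursion on each column, and the power formula by Cayley--Hamilton plus the $U_k$ recurrence); this makes the argument more self-contained, and working directly with the \emph{normalized} transfer matrices $T_n$ with $\det T_n=z$ avoids having to track the $r$-normalization factors separately, which the paper handles implicitly.
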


\noindent\textit{Remark.}  It is likely that these formulas can be obtained by combining \cite[Theorem 3.2]{PehSt} and \cite[Theorem 4.2]{PehSt}, but we provide a new short proof for the reader's convenience.

\begin{proof}
We will mimic the proof of \cite[Theorem 2.1]{SimGeron}.  If we set
\[
A_n=\begin{pmatrix}
z & -\bar{\alpha}_n\\
-\alpha_n z & 1
\end{pmatrix}
\]
then we can write the Szeg\H{o} recursion as
\[
\begin{pmatrix}
\Phi_{n+1}(z)\\
\Phi_{n+1}^*(z)
\end{pmatrix}=A_n
\begin{pmatrix}
\Phi_{n}(z)\\
\Phi_{n}^*(z)
\end{pmatrix}=A_nA_{n-1}\cdots A_0
\begin{pmatrix}
1\\
1
\end{pmatrix}
\]
Since the Verblunsky coefficients form a periodic sequence, we have (see \cite[Eq. 3.2.17]{OPUC1})
\[
\begin{pmatrix}
\Phi_{kp+s}(z)\\ \Phi_{kp+s}^*(z)
\end{pmatrix}=\frac{1}{2}
\begin{pmatrix}
\Phi_s(z)+\Psi_s(z) & \Phi_s(z)-\Psi_s(z)\\
\Phi_s^*(z)-\Psi_s^*(z) & \Phi_s^*(z)+\Psi_s^*(z)
\end{pmatrix}
\begin{pmatrix}
\frac{\Phi_p(z)+\Psi_p(z)}{2} & \frac{\Phi_p(z)-\Psi_p(z)}{2}\\
\frac{\Phi_p^*(z)-\Psi_p^*(z)}{2} & \frac{\Phi_p^*(z)+\Psi_p^*(z)}{2}
\end{pmatrix}^k
\begin{pmatrix}
1\\ 1
\end{pmatrix}
\]
(compare with \cite[Equation 3.22]{Simon3}).  Therefore, one can apply \cite[Theorem 1]{McL} with 
\[
y_k(z)=\sum_{m=0}^{\left\lfloor\frac{k}{2}\right\rfloor}\binom{n-m}{m}\left(z^{p/2}r\Delta(z)\right)^{k-2m}(-z^pr^2)^m=z^{pk/2}r^kU_k\left(\frac{\Delta(z)}{2}\right)
\]
to calculate the $k^{th}$ power of the rightmost matrix.  The desired result follows.
\end{proof}

\noindent\textit{Remark.}  The analog of Theorem \ref{closed} on the real line was proven by de Jesus and Petronilho (see \cite[Theorem 5.1]{dJP} and also \cite{Ger40}).  The special case $p=1$ was considered in \cite{SimGeron}.

\smallskip

\noindent\textit{Remark.}  Other formulas resembling those in Theorem \ref{closed} can be found in \cite[Theorem 2.1]{PehSt97}.

\smallskip

There are many simple consequences of the formulas in Theorem \ref{closed} and we explore many of them in Section \ref{simple}.  For now we turn to a more substantial analysis to resolve some unsolved problems.

\subsection{Singular Points in the Bands}\label{sing}

Notice that we can use Theorem \ref{closed} to deduce the Szeg\H{o} asymptotics of the orthonormal polynomials.  This has been done already in \cite{PehSt97,PehSt2,Simon3}, but we can provide a simple proof that comes with explicit bounds on the error terms.  When $\mu$ has periodic Verblunsky coefficients with period $p$ and $p$ is even, $\cup B_k=\Delta^{-1}([-2,2])$ (see \cite[Theorem 11.1.1]{OPUC2}) so we can define the functions $\Gamma_{\pm}$ as in \cite{Simon3} by
\begin{equation}\label{gamdef}
\Gamma_{\pm}(z)=\frac{\Delta(z)}{2}\pm\sqrt{\frac{\Delta(z)^2}{4}-1}
\end{equation}
Since $p$ is even, these functions are analytic on $\bbC\setminus\{\cup B_k\}$.  The formula \eqref{chebform3} implies
\[
U_n\left(\frac{\Delta(z)}{2}\right)=\frac{\Gamma_+(z)^{n+1}-\Gamma_-(z)^{n+1}}{\sqrt{\Delta(z)^2-4}}.
\]
The following result is now an immediate consequence of Theorem \ref{closed} and the formula (\ref{chebform3}).

\begin{theorem}\label{jdef}
Let $\{\varphi_n(z)\}_{n\geq0}$ be the sequence of orthonormal polynomials for the measure $\mu$, whose corresponding sequence of Verblunsky coefficients is periodic with period $p$, where $p$ is even.  For $z$ not in the bands of the support of $\mu$ it holds that
\begin{align*}
\lim_{k\rightarrow\infty}\frac{z^{-kp/2}\varphi_{kp}(z)}{\Gamma_+(z)^k}&=:j_0(z)=\frac{\Gamma_+(z)}{2\sqrt{\Delta(z)^2/4-1}}+\frac{\eta(z;1)}{4z^{p/2}\sqrt{\Delta(z)^2/4-1}}\\
\lim_{k\rightarrow\infty}\frac{z^{-kp/2}\varphi_{kp}^*(z)}{\Gamma_+(z)^k}&=:\ell_0(z)=\frac{\Gamma_+(z)}{2\sqrt{\Delta(z)^2/4-1}}+\frac{\eta(z;-1)}{4z^{p/2}\sqrt{\Delta(z)^2/4-1}}\\
\lim_{k\rightarrow\infty}\frac{z^{-kp/2}\varphi_{kp+s}(z)}{\Gamma_+(z)^k}&=:j_s(z)=\frac{(\varphi_s(z)+\psi_s(z))j_0(z)+(\varphi_s(z)-\psi_s(z))\ell_0(z)}{2},\\
\lim_{k\rightarrow\infty}\frac{z^{-kp/2}\varphi_{kp+s}^*(z)}{\Gamma_+(z)^k}&=:\ell_s(z)=\frac{(\varphi_s^*(z)-\psi_s^*(z))j_0(z)+(\varphi_s^*(z)+\psi_s^*(z))\ell_0(z)}{2},
\end{align*}
for each $s\in\{1,\ldots,p-1\}$.  Furthermore, in all cases the convergence is exponentially fast, i.e. for every compact set $X\subseteq\bbC\setminus\supp(\mu_{ac})$, there are constants $C_X>0$ and $\epsilon_X\in(0,1)$ so that
\begin{align*}
\left|\frac{z^{-kp/2}\varphi_{kp+s}(z)}{\Gamma_+(z)^k}-j_s(z)\right|\leq C_X(1-\epsilon_X)^k,\qquad\qquad z\in X\\
\left|\frac{z^{-kp/2}\varphi_{kp+s}^*(z)}{\Gamma_+(z)^k}-\ell_s(z)\right|\leq C_X(1-\epsilon_X)^k,\qquad\qquad z\in X
\end{align*}
for each $s\in\{0,\ldots,p-1\}$.
\end{theorem}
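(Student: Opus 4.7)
The plan is to substitute the closed-form expression \eqref{chebform3} for $U_n(\Delta(z)/2)$ into the formulas of Theorem \ref{closed} and then exploit the fact that $|\Gamma_-(z)/\Gamma_+(z)|<1$ off the bands. Specifically, because $\Gamma_+\Gamma_-=1$ identically, outside $\bigcup B_k$ we have $|\Gamma_+(z)|>1>|\Gamma_-(z)|$, so $\Gamma_-^{k+1}/\Gamma_+^k$ decays exponentially. Writing
\[
\frac{U_k(\Delta(z)/2)}{\Gamma_+(z)^k}=\frac{\Gamma_+(z)-\Gamma_-(z)\bigl(\Gamma_-(z)/\Gamma_+(z)\bigr)^k}{\sqrt{\Delta(z)^2-4}},\qquad \frac{U_{k-1}(\Delta(z)/2)}{\Gamma_+(z)^k}=\frac{1-\bigl(\Gamma_-(z)/\Gamma_+(z)\bigr)^k}{\sqrt{\Delta(z)^2-4}},
\]
immediately yields, via the first formula in Theorem \ref{closed},
\[
\frac{z^{-kp/2}\varphi_{kp}(z)}{\Gamma_+(z)^k}\longrightarrow\frac{\Gamma_+(z)}{\sqrt{\Delta(z)^2-4}}+\frac{\eta(z;1)}{2z^{p/2}\sqrt{\Delta(z)^2-4}},
\]
which, using $\sqrt{\Delta^2-4}=2\sqrt{\Delta^2/4-1}$, is exactly $j_0(z)$. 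The identical argument applied to the second formula in Theorem \ref{closed} produces $\ell_0(z)$.

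Once the limits for $\varphi_{kp}$ and $\varphi_{kp}^*$ are established, the formulas for $\varphi_{kp+s}$ and $\varphi_{kp+s}^*$ follow by taking the limit of the third and fourth identities of Theorem \ref{closed} after dividing by $z^{kp/2}\Gamma_+(z)^k$: the prefactors $\varphi_s(z)\pm\psi_s(z)$ and $\varphi_s^*(z)\pm\psi_s^*(z)$ are $k$-independent, so the limits $j_s$ and $\ell_s$ are immediate linear combinations of $j_0$ and $\ell_0$, matching the claimed expressions.

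For the exponential convergence rate, the work is to get a uniform estimate on $|\Gamma_-(z)/\Gamma_+(z)|$ over a compact set $X\subseteq\bbC\setminus\supp(\mu_{ac})$. Since $p$ is even and $\bigcup B_k=\Delta^{-1}([-2,2])$, both $\Gamma_\pm$ are analytic on $\bbC\setminus\bigcup B_k$, and $|\Gamma_-/\Gamma_+|$ is a continuous function which is strictly less than $1$ off the bands. Hence $\rho_X:=\max_{z\in X}|\Gamma_-(z)/\Gamma_+(z)|<1$, and a similarly uniform bound holds for the quantities $\Gamma_+(z)$, $\Gamma_-(z)$, $1/\sqrt{\Delta(z)^2-4}$, $\eta(z;\pm 1)$, $\varphi_s$, $\psi_s$, etc., appearing in the formulas. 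Choosing $\epsilon_X=1-\rho_X$ and an appropriate constant $C_X$ absorbing these bounds then yields the claimed estimate
\[
\left|\frac{z^{-kp/2}\varphi_{kp+s}(z)}{\Gamma_+(z)^k}-j_s(z)\right|\le C_X(1-\epsilon_X)^k
\]
uniformly on $X$, and similarly for the $\ast$-version.

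The main (mild) obstacle is really bookkeeping: verifying that $1/\sqrt{\Delta(z)^2-4}$ and $1/z^{p/2}$ are bounded on $X$ (they are, since $X$ avoids both the bands and the origin after possibly enlarging $X$ away from $0$, or rather since $z=0$ is interior to the unit disk and typically in a gap where $\Delta^2\neq 4$, so $\sqrt{\Delta^2-4}\neq 0$ on $X$), and confirming that the choice of branch of $\sqrt{\Delta^2/4-1}$ used to define $\Gamma_+$ is the one making $|\Gamma_+|>1$ off the bands. Beyond this, the argument is a direct computation from Theorem \ref{closed} and \eqref{chebform3}, with no further essential ideas required.
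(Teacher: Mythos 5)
Your proof is correct and follows essentially the same route as the paper: substitute the closed form \eqref{chebform3} into Theorem \ref{closed}, then use $|\Gamma_+|>1>|\Gamma_-|$ off the bands to obtain both the limit and the geometric rate. The only remark worth making is that the parenthetical worry about $1/z^{p/2}$ and $1/\sqrt{\Delta^2-4}$ near $z=0$ is unnecessary: $\Delta$ has a pole of order $p/2$ at the origin, so $\Gamma_-/\Gamma_+=\Gamma_-^2\to 0$ and the combinations $\Gamma_+/\sqrt{\Delta^2-4}$ and $\eta/(z^{p/2}\sqrt{\Delta^2-4})$ stay bounded; the individual factors need not be.
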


\begin{proof}
As mentioned above, the form of $j_s$ and $\ell_s$ is a direct consequence of Theorem \ref{closed} and the formula (\ref{chebform3}).  The statement about exponential convergence follows from the fact that $\Gamma_+$ maps the complement of the bands of $\supp(\mu)$ to the complement of the closed unit disk and $\Gamma_-$ maps the complement of the bands of $\supp(\mu)$ to the unit disk.
\end{proof}


With Theorem \ref{jdef} in hand we can now resolve an unsolved problem from \cite{Simon3} about the zeros of $\{j_s\}_{s=0}^{p-1}$.  In \cite{Simon3}, Simon defines singular points of order $k$ to be those points in the bands where $j_s$ vanishes to order $k$ (note that $j_s(\eitheta)$ is defined when $\eitheta$ is in a band as $\lim_{r\rightarrow1^-}j_s(r\eitheta)$).  His motivation for this definition comes from a desire to understand the asymptotic distribution of the zeros of the polynomial $\varphi_n$ as $\nri$.  Certain estimates can be made more precise in the absence of singular points.  It was also mentioned in \cite{Simon3} that there are no known examples with singular points.  We will show that there are indeed measures with periodic Verblunsky coefficients that have singular points.

Let $\nu$ be the equilibrium measure of $\supp(\mu)$.  As in \cite{Simon3}, define the function $k(\theta)$ to be the cumulative distribution function of $\nu$ 
\[
k(\theta)=\nu\left(\{e^{it}:x_1\leq t\leq\theta\}\right),
\]
where $x_1$ is as described in Section \ref{intro}.  If $s\in\{0,\ldots,n-1\}$, then simple algebraic manipulation of the formulas in Theorem \ref{jdef} and an application of \cite[Proposition 3.4]{Simon3} shows that $j_s(\eitheta)=0$ with $\eitheta$ in some band of the support of $\mu$ if and only if
\begin{align}\label{js}
2e^{-i\pi pk(\theta)}=-\eta\left(\eitheta;\frac{\psi_s(\eitheta)}{\varphi_s(\eitheta)}\right)
\end{align}

From (\ref{js}) we can deduce that there are in fact singular points in some cases.  Indeed, let us consider the sequence of Verblunsky coefficients $\{0,\ldots,0,\alpha,0,\ldots,0,\alpha,0,\ldots\}$, where there are $p-1$ zeros between each $\alpha$ ($\alpha$ will be chosen later).  Then $\psi_s/\varphi_s\equiv1$ for any $s\in\{0,\ldots,p-1\}$ so (\ref{js}) becomes
\begin{align}\label{jseq}
e^{-i\pi pk(\theta)}=\frac{1+\bar{\alpha}}{\sqrt{1-|\alpha|^2}}
\end{align}
for all $s\in\{0,1,\ldots,p-1\}$.  If we specify that $\Real[\alpha]=-|\alpha|^2\neq0$, then the right-hand side of (\ref{jseq}) has absolute value $1$.  Since the left-hand side of (\ref{jseq}) takes on all values in the unit circle $p/2$ times as $\theta$ runs from $x_1$ to $2\pi+x_1$, we see that we have exactly $p/2$ singular points and they are common to all $\{j_s\}_{s=0}^{p-1}$.  In fact, one can see that in this case there are precisely $p$ bands and they alternate between having a singular point and not having a singular point.

We note that the circle defined by the condition $\Real[\alpha]=-|\alpha|^2$ is precisely the boundary circle of the closed set $\Omega$ such that the measure of orthogonality has a mass point in each gap if and only if $\alpha\not\in\Omega$ (see \cite[Section 1.6]{OPUC1}).

\section{Universality}\label{universality}

Recall that the reproducing kernel for polynomials of degree at most $n$ in $L^2(\mu)$ is given by
\[
K_n(z,w;\mu):=\sum_{j=0}^n\varphi_j(z)\overline{\varphi_j(w)}
\]
Let $\sigma_n$ be a real sequence that monotonically tends to $0$ as $\nri$.  Our goal in this section is to understand the limit
\[
\lim_{\nri}\frac{K_n(e^{i(\theta+a\sigma_n)},e^{i(\theta+b\sigma_n)};\mu)}{K_n(\eitheta,\eitheta;\mu)}
\]
for appropriate values of $\theta\in\bbR$ and an appropriate sequence $\sigma_n$.  The two sequences we will be most interested in are $\sigma_n=-n^{-2}$ and $\sigma_n=n^{-1}$, depending on location of $\eitheta$ in the support of $\mu$.  A desire to study limits of the above form comes from random matrix theory, where such limits can be useful in calculating eigenvalue correlation functions.  There are many examples of measures on the unit circle for which these limits can be calculated (see \cite{Bourgade,LeviLub,LubNg,SimUniv,SimGeron}), but the case of a measure supported on several arcs of the unit circle has not been previously considered.  That is the case we aim to consider here and we note that the analogous results on the real line have been proven by Totik in \cite{Totik} and by Danka in \cite{Danka}.

We will consider measures $\mu$ on the unit circle whose corresponding sequence of Verblunsky coefficients is periodic with period $p$, where $p$ is even.  Our analysis will follow the general method pioneered by Lubinsky, which has been utilized repeatedly by other authors.  The main technical obstacle that we will need to overcome is to take special consideration of the set of resonances, which are the zeros of $\varphi_p(z)-\varphi_p^*(z)$ (see \cite[Section 11.3]{OPUC2}).  These zeros are all on the unit circle and are located in the closure of $\partial\bbD\setminus\Delta^{-1}((-2,2))$.  Also, every closed gap is a resonance  and it is possible for the endpoint of a band to be a resonance (see \cite[Theorem 11.3.1]{OPUC2}).

Before we can state our result, we need to define the kernel that will appear in our theorem.  For any $s\in\bbR$, let $J_s$ denote the Bessel function of the first kind order $s$.  Define
\[
\bbJ_s^*(a,b):=\begin{cases}
\frac{J_s(\sqrt{a})\sqrt{b}J_s'(\sqrt{b})-J_s(\sqrt{b})\sqrt{a}J_s'(\sqrt{a})}{2a^{s/2}b^{s/2}(a-b)}\qquad & a\neq b\\
\frac{1}{4a^s}\left(J_s(\sqrt{a})^2-J_{s+1}(\sqrt{a})J_{s-1}(\sqrt{a})\right) & a=b.
\end{cases}
\]
As noted in \cite{LubEdge}, $\bbJ_s^*(a,b)$ is entire on $\bbC^2$.  We also define $\sinc(x)=\frac{\sin(x)}{x}$.

Now we can state our main result of this section (recall the definition of $V$ from \eqref{vdef} and $W$ from \eqref{wdef}).

\begin{theorem}\label{univthm}
Let $\mu^*$ be a measure of the form $w(t)\mu+\mu_0$, where $\mu$ has periodic Verblunsky coefficients with period $p$, where $p$ is even.  Suppose that $w$ is positive and continuous everywhere on $\supp(\mu)$ and $\supp(\mu_0)\subseteq\supp(\mu_{ac})$.
\begin{itemize}
\item[i)]  If $\eitheta$ is an interior point of a band in $\supp(\mu)$ and $\mathrm{dist}(\eitheta,\supp(\mu_0))>0$, then
\[
\lim_{\kri}\frac{K_{n}\left(e^{i(\theta+a/n)},e^{i(\theta+b/n)};\mu^*\right)}{K_{n}(\eitheta,\eitheta;\mu^*)}=e^{i\frac{a-\bar{b}}{2}}\sinc(V(\theta)(a-\bar{b}))
\]
where the convergence is uniform for $a,b$ in compact subsets of $\bbC^2$.
\item[ii)]  If $\eitheta$ is an endpoint of a band in $\supp(\mu)$, $\mathrm{dist}(\eitheta,\supp(\mu_0))>0$, $\Delta(\eitheta)=2$, and $\eitheta$ is not a resonance, then
\[
\lim_{\kri}\frac{K_{n}\left(e^{i(\theta-a/n^2)},e^{i(\theta-b/n^2)};\mu^*\right)}{K_{n}(\eitheta,\eitheta;\mu^*)}=\frac{\bbJ_{1/2}^*\left(\frac{W(\theta)a}{p^2},\frac{W(\theta)\bar{b}}{p^2}\right)}{\bbJ_{1/2}^*\left(0,0\right)}
\]
where the convergence is uniform for $a,b$ in compact subsets of $\bbC^2$.
\item[iii)]  If $\eitheta$ is an endpoint of a band in $\supp(\mu)$, $\mathrm{dist}(\eitheta,\supp(\mu_0))>0$, $\Delta(\eitheta)=2$, and $\eitheta$ is a resonance, then
\[
\lim_{\kri}\frac{K_{n}\left(e^{i(\theta-a/n^2)},e^{i(\theta-b/n^2)};\mu^*\right)}{K_{n}(\eitheta,\eitheta;\mu^*)}=\frac{\bbJ_{-1/2}^*\left(\frac{W(\theta)a}{p^2},\frac{W(\theta)\bar{b}}{p^2}\right)}{\bbJ_{-1/2}^*\left(0,0\right)}
\]
where the convergence is uniform for $a,b$ in compact subsets of $\bbC^2$.
\end{itemize}
\end{theorem}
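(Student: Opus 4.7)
The overall strategy is the Lubinsky framework. First establish universality for the pure periodic measure $\mu$ using Theorem \ref{closed}, then transfer to the perturbed measure $\mu^*$ by localization. For the transfer, since $w$ is positive and continuous at $\eitheta$ and $\supp(\mu_0)$ is a positive distance from $\eitheta$, a standard Totik--Lubinsky comparison of Christoffel functions shows that
$$\frac{K_n(z_n,w_n;\mu^*)}{K_n(\eitheta,\eitheta;\mu^*)}\quad\text{and}\quad\frac{K_n(z_n,w_n;\mu)}{K_n(\eitheta,\eitheta;\mu)}$$
have the same limit: the multiplicative factor $w(\theta)$ cancels in the ratio, and $\mu_0$ is invisible at the scales $1/n$ or $1/n^2$ because it is supported away from $\eitheta$.

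For the pure periodic case I would begin from the Christoffel--Darboux identity
$$K_n(z,w;\mu)=\frac{\overline{\varphi_{n+1}^*(w)}\,\varphi_{n+1}^*(z)-\overline{\varphi_{n+1}(w)}\,\varphi_{n+1}(z)}{1-\bar w z}$$
and insert the representation of Theorem \ref{closed} with $n+1=kp+s$. This writes the numerator as a bilinear combination of $U_k(\Delta(\cdot)/2)$ and $U_{k-1}(\Delta(\cdot)/2)$ with coefficients depending only on the fixed periodic data $\varphi_s,\psi_s$, and $\eta(\cdot;\pm 1)$. For case (i), at an interior band point write $\Delta(\eitheta)=2\cos t(\theta)$ with $t(\theta)\in(0,\pi)$ and Taylor expand $\Delta(e^{i(\theta+a/n)})/2=\cos(t(\theta)+\gamma(\theta)a/n+O(n^{-2}))$, where $\gamma(\theta)=-W(\theta)/(2\sin t(\theta))$ satisfies $|\gamma(\theta)|/p=V(\theta)$. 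Using $U_k(\cos t)=\sin((k+1)t)/\sin t$, the fast phases $(k+1)t(\theta)$ at $z_n$ and $w_n$ combine in the CD numerator to produce, after dividing by the diagonal kernel, the limit $\sinc(V(\theta)(a-\bar b))$; the factor $e^{i(a-\bar b)/2}$ arises from the denominator $1-\bar w_n z_n$ together with the $z^{kp/2}$ prefactors in Theorem \ref{closed}.

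For the endpoint cases (ii) and (iii), the scale $\sigma_n=n^{-2}$ is dictated by $\Delta(e^{i(\theta-a/n^2)})=2-W(\theta)a/n^2+O(n^{-4})$: setting $\Delta/2=\cos t$ yields $t\sim\sqrt{W(\theta)a}/n$, hence $(k+1)t\sim\sqrt{W(\theta)a}/p$ when $k\sim n/p$. The classical Bessel limit
$$\tfrac{1}{k}U_k\!\left(1-\xi^2/(2k^2)\right)\longrightarrow\sin(\xi)/\xi$$
then produces, with $\xi^2=W(\theta)a/p^2$, exactly the Bessel kernel of index $1/2$ (recall $J_{1/2}(u)=\sqrt{2/(\pi u)}\sin u$), yielding $\bbJ_{1/2}^*$ in case (ii). For case (iii), the resonance condition $\varphi_p(\eitheta)=\varphi_p^*(\eitheta)$ forces $\eta(\eitheta;1)=\eta(\eitheta;-1)$, so the combinations of $U_k$ and $U_{k-1}$ that dominated in case (ii) now cancel to leading order; the surviving terms involve a derivative or index shift that converts the $\sin$ asymptotics to $\cos$ asymptotics, producing the Bessel kernel of index $-1/2$ (since $J_{-1/2}(u)=\sqrt{2/(\pi u)}\cos u$).

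The main obstacle is the endpoint analysis, and in particular verifying that the resonance condition produces exactly $\bbJ_{-1/2}^*$ rather than some other limit. One must track the precise order of vanishing of $\varphi_p-\varphi_p^*$ at the resonance, combine it with the local expansion of $\eta(\cdot;\pm 1)$, and identify which combinations in the bilinear CD formula survive after the leading cancellation. A secondary technical point is the $(a,b)$ bookkeeping: the apparent simple pole at $a=\bar b$ in the CD formula must cancel so that the limit is genuinely entire on $\bbC^2$, and the extension from real $a,b$ to complex $a,b$ with locally uniform convergence is handled by a normal families argument once uniform bounds on real compact sets are established.
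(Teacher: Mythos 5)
Your overall framework is the same as the paper's: reduce to the pure periodic case via a Totik--Lubinsky comparison, then use the Christoffel--Darboux identity together with Theorem~\ref{closed} and Chebyshev asymptotics, with a normal-families argument for complex $a,b$. The main moves in your sketch of cases (i) and (ii) correspond (the paper packages the interior/edge Chebyshev computations as known asymptotics of the Chebyshev reproducing kernel, citing Simon's ``Szeg\H{o}'s theorem'' book and Danka, but this is essentially the $U_k(\cos t)=\sin((k+1)t)/\sin t$ calculation you describe). However, there are two genuine gaps.

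First, your case (i) argument assumes $\Delta(\eitheta)=2\cos t(\theta)$ with $t(\theta)\in(0,\pi)$, so it breaks at a \emph{closed gap}: an interior point of a band where $\Delta(\eitheta)=\pm 2$. Closed gaps fall under case (i) of the theorem (they are interior band points), but your coefficient $\gamma(\theta)=-W(\theta)/(2\sin t(\theta))$ is indeterminate there ($W(\theta)=0$ and $\sin t(\theta)=0$), and the definition of $V(\theta)$ is a $0/0$ limit. The paper handles closed gaps as a separate sub-case, expanding $\Delta$ to second order in $\theta$ so that $W'(\theta)$ appears, and then using L'H\^opital to identify the resulting constant with $V(\theta)$. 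Every closed gap is automatically a resonance, so this sub-case also requires the resonance machinery below.

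Second, in case (iii) you observe that the resonance condition $\varphi_p(\eitheta)=\varphi_p^*(\eitheta)$ implies $\eta(\eitheta;1)=\eta(\eitheta;-1)$, and then assert a leading-order cancellation. That observation alone is not enough: to get cancellation one must know the \emph{value} $\eta(\eitheta;1)/(2e^{ip\theta/2})$ is exactly $-1$ (when $\Delta(\eitheta)=2$), i.e.\ that the coefficient of $U_{k-1}$ is precisely $-1$, so that $U_k(1)-U_{k-1}(1)=1$ rather than something $O(k)$. This is the content of the paper's Lemma~\ref{psiplus}, which shows $\frac{\psi_p(\eitheta)+\psi_p^*(\eitheta)}{2e^{ip\theta/2}}=\Delta(\eitheta)/2$ at any resonance; it is proved by combining the explicit formula for $\varphi_{kp}(\eitheta)$ with an a priori bound $\sum_{k\leq N}|\varphi_{kp}(\eitheta)|^2=O(N)$ coming from Christoffel-function estimates. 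Without this normalization fact, your claim that the surviving terms ``convert the $\sin$ asymptotics to $\cos$ asymptotics'' is not established. A minor further omission is the passage from the subsequence $n=kp-1$ to general $n$ (the paper's Lemma~\ref{reduce}), though that step is routine given the pointwise polynomial bounds.
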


\noindent\textit{Remark.}  In parts (ii) and (iii), there exists a corresponding result for $\Delta(\eitheta)=-2$, but we will omit those calculations.

\smallskip

\noindent\textit{Remark.}  Our result allows for the possibility of mass points in the gaps between the bands of $\supp(\mu)$.

\smallskip

\noindent\textit{Remark.}  We note here that our assumptions imply $\mu^*$ is regular on $\supp(\mu)$ in the sense of Stahl and Totik (see \cite{StaTo}).

\smallskip

For the remainder of this section, let $\mu$ be fixed as in the statement of Theorem \ref{univthm}.  Our first task is to prove Theorem \ref{univthm} for $\mu^*=\mu$.  We will proceed to first understand the limit
\[
\lim_{\kri}\frac{K_{kp-1}(e^{i(\theta+a\sigma_{kp-1})},e^{i(\theta+b\sigma_{kp-1})};\mu)}{K_{kp-1}(\eitheta,\eitheta;\mu)}
\]
For brevity, let us define $z_a=e^{i(\theta+a\sigma_{kp-1})}$ and $z_b=e^{i(\theta+b\sigma_{kp-1})}$ for complex numbers $a$ and $b$.  If we assume $a\neq\bar{b}$, then we can use the Christoffel-Darboux formula (see \cite{SimonCD}) to write
\[
K_{kp-1}(z_a,z_b;\mu)=\frac{\varphi_{kp}^*(z_a)\overline{\varphi_{kp}^*(z_b)}-\varphi_{kp}(z_a)\overline{\varphi_{kp}(z_b)}}{1-e^{i(a-\bar{b})\sigma_{kp-1}}}
\]

Now we split the calculation into cases.

\subsection{Case 1: $\eitheta$ is not a resonance}\label{non}

We begin with several lemmas that will help us understand the necessary asymptotics of the orthonormal polynomials.

\begin{lemma}\label{atx1}
\begin{itemize}
\item[(a)]  If $\eitheta$ is on the interior of a band in the support of $\mu$ and $\eitheta$ is not a resonance, then $|\varphi_n(e^{i(\theta+c/n)})|=O(1)$ as $\nri$ uniformly for $c$ in compact subsets of $\bbC$.
\item[(b)]   If $\eitheta$ is at the edge of a band in the support of $\mu$ and $\eitheta$ is not a resonance, then $|\varphi_n(e^{i(\theta+c/n^2)})|=O(n)$ as $\nri$ uniformly for $c$ in compact subsets of $\bbC$.
\end{itemize}
\end{lemma}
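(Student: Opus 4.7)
The plan is to apply Theorem~\ref{closed} and reduce both statements to bounds on Chebyshev values $U_k(\Delta(z)/2)$ at the perturbed arguments. Writing $n=kp+s$ with $s\in\{0,\ldots,p-1\}$ and $k=\lfloor n/p\rfloor$, Theorem~\ref{closed} expresses $\varphi_n(z)$ as a linear combination of $z^{kp/2}U_k(\Delta(z)/2)$ and $z^{kp/2}U_{k-1}(\Delta(z)/2)$ whose coefficients are built from the fixed-degree polynomials $\varphi_s,\psi_s,\varphi_s^*,\psi_s^*$ together with the factor $\eta(z;\pm1)/(2z^{p/2})$. All of these coefficient factors are polynomials of degree at most $p$ divided by $z^{p/2}$, so they are uniformly $O(1)$ when $z=e^{i(\theta+c\sigma_n)}$ with $c$ in a compact subset of $\mathbb{C}$. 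The scalar $|z^{kp/2}|=e^{-\mathrm{Im}(c)kp\sigma_n/2}$ is likewise $O(1)$ because $kp\sigma_n$ is bounded in both cases. Hence the lemma reduces to showing $|U_k(\Delta(z_c)/2)|=O(1)$ in case (a) and $|U_k(\Delta(z_c)/2)|=O(n)$ in case (b), with identical bounds for $U_{k-1}$; here $z_c:=e^{i(\theta+c\sigma_n)}$.

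The main tool for these Chebyshev bounds is the closed form \eqref{chebform3}. For case (a), write $\Delta(e^{i\theta})/2=\cos t_0$ with $t_0\in(0,\pi)$. A first-order Taylor expansion of $\Delta$ in $z$ gives $\Delta(z_c)/2=\cos t_0+O(1/n)$ uniformly in $c$. Then
\[
\frac{\Delta(z_c)}{2}\pm\sqrt{\frac{\Delta(z_c)^2}{4}-1}=e^{\pm i t_0}+O(1/n),
\]
which has modulus $1+O(1/n)$. Raised to the power $k+1\leq n/p+1$ this stays uniformly bounded, while the denominator $2\sqrt{\Delta(z_c)^2/4-1}$ converges to $2i\sin t_0\neq 0$. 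Thus $U_k(\Delta(z_c)/2)=O(1)$, which is conclusion (a).

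For case (b), assume $\Delta(e^{i\theta})=2$ (the case $-2$ is symmetric). The non-resonance hypothesis at a band endpoint will imply $W(\theta)\neq 0$, so $\Delta(z_c)-2=W(\theta)c/n^2+O(1/n^4)$ by Taylor expansion. Consequently $\Delta(z_c)^2/4-1=O(1/n^2)$ and $\sqrt{\Delta(z_c)^2/4-1}=O(1/n)$. The two bases $\Delta(z_c)/2\pm\sqrt{\Delta(z_c)^2/4-1}$ equal $1+O(1/n)$, so their $(k+1)$-th powers remain $O(1)$. Dividing by the denominator of order $1/n$ produces $|U_k(\Delta(z_c)/2)|=O(n)$, which is conclusion (b).

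The main obstacle is the structural claim that $W(\theta)\neq 0$ at any non-resonance band endpoint. Since $\Delta(e^{is})$ is real with $|\Delta|\leq 2$ on the bands and $\Delta(e^{i\theta})=\pm 2$ at an endpoint, $W(\theta)=0$ would mean that $\Delta$ touches $\pm 2$ tangentially rather than crossing transversally, which happens exactly when the endpoint coincides with a closed gap or when two bands meet at a single point, i.e., precisely at the band-endpoint resonances described in \cite[Theorem~11.3.1]{OPUC2}. Extracting this correspondence between the resonance definition (zeros of $\varphi_p-\varphi_p^*$) and the vanishing order of $\Delta\mp 2$ is the most delicate step; once it is in hand, the expansions above are straightforward.
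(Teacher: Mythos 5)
Your reduction to bounds on $U_k(\Delta(z_c)/2)$ via Theorem~\ref{closed} mirrors the paper, and your treatment of part~(a) is sound: there the denominator $2\sqrt{\Delta(z_c)^2/4-1}$ converges to $2i\sin t_0\neq0$, so dividing a bounded numerator by it is legitimate, and the $(1+O(1/n))^{k+1}$ bound on each base is fine.

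Part~(b) has a genuine gap. You show the numerator of \eqref{chebform3} is $O(1)$ and the denominator $2\sqrt{\Delta(z_c)^2/4-1}$ is $O(1/n)$, and then conclude the quotient is $O(n)$. But an upper bound on the denominator gives no control on the quotient; you would need a \emph{lower} bound of order $1/n$, and that fails uniformly. Concretely, take $c$ near $0$ (or any $c$ with $W(\theta)c/n^2\approx0$ modulo the higher-order terms): then $\Delta(z_c)\to2$, the denominator is far smaller than $1/n$ and vanishes at $c=0$, and the formula \eqref{chebform3} becomes $0/0$. The correct reason $U_k$ is $O(k)$ in this regime is that the numerator $(x+y)^{k+1}-(x-y)^{k+1}$ has a factor of $y=\sqrt{x^2-1}$ in its Taylor expansion, beginning $2(k+1)x^k y+\cdots$, so the $y$ in the denominator cancels and one gets $U_k(x)\approx k+1$ as $y\to0$. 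Your argument never extracts this cancellation, so the division step as written is unjustified. The paper avoids this by using \eqref{chebform2} in the form $U_n(\cos(x/n))=n\sin(x)/x+o(n)$, where $\sin(x)/x$ is entire, so the $c\to0$ degeneracy is handled automatically.

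Separately, your last paragraph overcomplicates the role of $W(\theta)$. At a genuine band endpoint (as opposed to a closed gap), $W(\theta)\neq0$ is a structural property of $\Delta$ on the bands, not a consequence of non-resonance; closed gaps are always resonances, but a band endpoint can be a resonance without $W$ vanishing. Moreover the $O(n)$ bound in (b) does not in fact need $W(\theta)\neq0$ — if $W(\theta)=0$ the Chebyshev value only gets smaller — so this whole detour is dispensable once the numerator cancellation above is handled correctly. The non-resonance hypothesis enters not through $\Delta$ but through the coefficient $\eta(z;\pm1)/(2z^{p/2})$, which you rightly bound but do not analyze further; it is what distinguishes the $O(n)$ of this lemma from the $O(1)$ of Lemma~\ref{atx2}.
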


\noindent\textit{Remark:}  The scaling of $n^{-1}$ and $n^{-2}$ are exactly what one would predict based on Bernstein's Inequality and Markov's Inequality (see \cite[Theorem 5.1.7]{BE} and \cite[Theorem 5.1.8]{BE} respectively).

\begin{proof}
In both cases, Theorem \ref{closed} shows that it suffices to show that the lemma holds when $n$ is a multiple of $p$.

\smallskip

(a)  By assumption, we can write
\[
\frac{\Delta(e^{i(\theta+c/n)})}{2}=x+\frac{t}{n}+O(n^{-2}),\qquad\qquad\nri
\]
for some $x\in(-1,1)$ and $t=t(c)\in\bbC$ that remains bounded as $c$ varies through any compact set in $\bbC$.  Since $|x|<1$, the $\arccos$ function is analytic in a neighborhood of $x$ and hence we can define
\[
w_n=\arccos\left(\frac{\Delta(e^{i(\theta+c/n)})}{2}\right)=\arccos(x)+\frac{\beta}{n}+O(n^{-2}),\qquad\nri
\]
for some $\beta=\beta(c)$ that remains bounded as $c$ varies through any compact set in $\bbC$.  If we let $\alpha=\arccos(x)$, then
\[
U_n\left(\frac{\Delta(e^{i(\theta+c/n)})}{2}\right)=U_n(\cos(w_n))=\frac{\sin((n+1)\alpha+\frac{n+1}{n}\beta+o(n^{-1}))}{\sin(\alpha+o(1))},\qquad\nri.
\]
Using basic trigonometric identities and the fact that $x$ is real with $\sin(x)\neq0$, we see that this expression is $O(1)$ as $\nri$.  The desired conclusion now follows from Theorem \ref{closed}.

\smallskip

(b) In this case, $|\Delta(\eitheta)|=2$, so without loss of generality, $\Delta(\eitheta)=2$.  Taylor expanding shows
\[
\frac{\Delta(e^{i(\theta+c/n^2)})}{2}=1-\frac{t}{n^2}+O(n^{-4}),\qquad\qquad \nri,
\]
for some complex number $t=t(c)$ that remains bounded as $c$ varies throughout any compact set in $\bbC$.  We can use \eqref{chebform2} to write
\begin{equation}\label{chebcos}
U_n\left(1-\frac{x^2}{2n^2}\right)= U_n(\cos(x/n))+o(n)= \frac{n\sin(x)}{x}+o(n),\qquad\qquad\nri.
\end{equation}
The desired conclusion follows from Theorem \ref{closed}.
\end{proof}

By applying Lemma \ref{atx1}, the Cauchy-Schwarz inequality, and Montel's Theorem, we obtain a proof of our next lemma.  Its importance is that it will allow us to prove only pointwise convergence, since the uniform convergence will follow automatically.

\begin{lemma}\label{normal}
Suppose $\eitheta\in\supp(\mu)$.  If $\eitheta$ is an interior point in a band in the support of $\mu$ and $\eitheta$ is not a resonance, then
\[
\left\{\frac{K_n(e^{i(\theta+a/n)},e^{i(\theta+b/n)};\mu)}{n}\right\}_{n\in\bbN}
\]
is a normal family on $\bbC^2$ in the variables $a$ and $b$.  If $\eitheta$ is a band edge and not a resonance, then
\[
\left\{\frac{K_n(e^{i(\theta+a/n^2)},e^{i(\theta+b/n^2)};\mu)}{n^3}\right\}_{n\in\bbN}
\]
is a normal family on $\bbC^2$ in the variables $a$ and $b$.
\end{lemma}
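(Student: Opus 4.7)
The plan is to reduce normality to a uniform bound of the rescaled kernels on compact subsets of $\bbC^2$ and then invoke Montel's theorem. For fixed $n$, the function $K_n(e^{i(\theta+a/n)}, e^{i(\theta+b/n)};\mu) = \sum_{j=0}^n \varphi_j(e^{i(\theta+a/n)}) \overline{\varphi_j(e^{i(\theta+b/n)})}$ is holomorphic in $a$ and antiholomorphic in $b$; regarded as a function of $(a,\bar{b}) \in \bbC^2$, it is entire, so local uniform boundedness will deliver normality via Montel's theorem in two complex variables.

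The diagonal is controlled through the Cauchy-Schwarz inequality for the reproducing kernel, $|K_n(z,w;\mu)|^2 \leq K_n(z,z;\mu) K_n(w,w;\mu)$, so the task reduces to bounding $K_n(z_a,z_a;\mu) = \sum_{j=0}^n |\varphi_j(z_a)|^2$ for $z_a := e^{i(\theta+a/n)}$ in case (i) or $z_a := e^{i(\theta+a/n^2)}$ in case (ii). The key reindexing trick in case (i) is to write $a/n = c_j/j$ with $c_j := aj/n$, so that for $|a|\leq R$ and $1\leq j\leq n$ one has $|c_j|\leq R$. Lemma \ref{atx1}(a) then gives $|\varphi_j(z_a)| = |\varphi_j(e^{i(\theta+c_j/j)})| = O(1)$ uniformly for $j$ beyond some threshold $N=N(R)$ and $a$ in the ball of radius $R$; the finitely many terms with $j < N$ contribute a bounded amount since each $\varphi_j$ is a fixed polynomial evaluated at a point in a compact region of $\bbC$. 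Summing yields $K_n(z_a,z_a;\mu) = O(n)$ uniformly on compact sets, whence Cauchy-Schwarz gives $|K_n(z_a,z_b;\mu)|/n = O(1)$ on compacta.

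Case (ii) proceeds identically with the substitution $a/n^2 = c_j/j^2$, $c_j := a j^2/n^2$, so that $|c_j|\leq |a|$ for $j\leq n$; Lemma \ref{atx1}(b) then delivers $|\varphi_j(z_a)| = O(j)$ uniformly, whence $\sum_{j=0}^n |\varphi_j(z_a)|^2 = O(n^3)$ and Cauchy-Schwarz yields $|K_n(z_a,z_b;\mu)|/n^3 = O(1)$ on compacta. In both cases Montel's theorem finishes the proof. The only nontrivial step is the reindexing which converts the pointwise asymptotic bound of Lemma \ref{atx1} (a single polynomial at a shrinking-scale perturbation) into a uniform bound on every summand of the reproducing kernel; the remainder is routine.
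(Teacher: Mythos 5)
Your proof is correct and follows the same route the paper indicates (Lemma \ref{atx1}, Cauchy--Schwarz, Montel). The paper gives only a one-sentence pointer to these ingredients; what you have added, and what is really the one non-obvious step, is the reindexing $a/n = c_j/j$ (resp. $a/n^2 = c_j/j^2$) that converts the single-polynomial bound of Lemma \ref{atx1}, stated at the scale matching the polynomial's own degree, into a bound valid simultaneously for every summand $\varphi_j(z_a)$ with $j\leq n$ at the fixed scale $1/n$ (resp. $1/n^2$). Together with the observation that the finitely many low-degree terms are trivially bounded and that $K_n$ is holomorphic in $(a,\bar b)$, this is exactly the argument the authors intend, just spelled out.
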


Now we proceed to calculate the asymptotics of the Christoffel-Darboux kernel.  By invoking Theorem \ref{closed}, we can rewrite the Christoffel Darboux formula as
\begin{align}
\nonumber K_{kp-1}(z_a,z_b;\mu)=&e^{i\frac{kp(a-\bar{b})\sigma_{kp-1}}{2}}\left(1-e^{i(a-\bar{b})\sigma_{kp-1}}\right)^{-1}\times\\
\nonumber&\quad\bigg(U_{k}\left(\frac{\Delta(z_a)}{2}\right)\overline{U_{k-1}\left(\frac{\Delta(z_b)}{2}\right)}\overline{\left(\frac{\varphi_p^*(z_b)-\varphi_p(z_b)}{2z_b^{p/2}}\right)}\\
\label{long}&\qquad+U_{k-1}\left(\frac{\Delta(z_a)}{2}\right)\overline{U_{k}\left(\frac{\Delta(z_b)}{2}\right)}\frac{\varphi_p^*(z_a)-\varphi_p(z_a)}{2z_a^{p/2}}\\
\nonumber&\qquad\frac{[\eta(z_a;-1)\overline{\eta(z_b;-1)}-\eta(z_a;1)\overline{\eta(z_b;1)}]U_{k-1}\left(\frac{\Delta(z_a)}{2}\right)\overline{U_{k-1}\left(\frac{\Delta(z_b)}{2}\right)}}{4z_a^{p/2}\bar{z}_b^{p/2}}\bigg)
\end{align}

To help us analyze this form of the expression, we provide the following lemma.

\begin{lemma}\label{cross}
With $z_a$ and $z_b$ defined as above, it holds that
\begin{align}\label{etas}
\eta(z_a;-1)\overline{\eta(z_b;-1)}-\eta(z_a;1)\overline{\eta(z_b;1)}=O(\sigma_n),\qquad\qquad\nri.
\end{align}
The $O$-estimate is uniform in $a$ and $b$ in compact subsets of $\bbC$.
\end{lemma}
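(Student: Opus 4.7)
The first move is to collapse the left-hand side of \eqref{etas}. Setting $A(z):=\varphi_p(z)-\varphi_p^*(z)$ and $B(z):=\psi_p(z)+\psi_p^*(z)$, the definition of $\eta$ gives $\eta(z;1)=A(z)-B(z)$ and $\eta(z;-1)=-A(z)-B(z)$. Expanding both products and subtracting, the diagonal $A\overline{A}$ and $B\overline{B}$ terms cancel, leaving
\[
\eta(z_a;-1)\overline{\eta(z_b;-1)} - \eta(z_a;1)\overline{\eta(z_b;1)} = 2\bigl[A(z_a)\overline{B(z_b)} + B(z_a)\overline{A(z_b)}\bigr].
\]
So it suffices to show the bracketed quantity is $O(\sigma_n)$.

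The key observation is that this bracket vanishes identically at $\sigma_n=0$, i.e.\ at $z_a=z_b=\eitheta$. For any polynomial $f$ of degree at most $p$, the reversal identity $\overline{f(\eitheta)}=e^{-ip\theta}f^*(\eitheta)$ holds on the unit circle. Since $(\varphi_p^*)^*=\varphi_p$ and $(\psi_p^*)^*=\psi_p$, one finds $A^*=-A$ and $B^*=B$, and therefore $\overline{A(\eitheta)}=-e^{-ip\theta}A(\eitheta)$ and $\overline{B(\eitheta)}=e^{-ip\theta}B(\eitheta)$. Substituting,
\[
A(\eitheta)\overline{B(\eitheta)}+B(\eitheta)\overline{A(\eitheta)} = e^{-ip\theta}A(\eitheta)B(\eitheta) - e^{-ip\theta}A(\eitheta)B(\eitheta) = 0.
\]

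What remains is a uniform Lipschitz estimate. For complex $b$ one cannot use $\overline{f(z_b)} = z_b^{-p}f^*(z_b)$, since $z_b$ may leave the unit circle, but one can always write $\overline{f(z_b)} = \hat{f}(\overline{z_b})$, where $\hat{f}$ denotes the polynomial with complex-conjugated coefficients. For $a,b$ in any fixed compact subset of $\bbC$, one has $z_a = \eitheta + O(\sigma_n)$ and $\overline{z_b} = e^{-i\theta} + O(\sigma_n)$ uniformly. Since $A,B,\hat{A},\hat{B}$ are fixed polynomials and hence Lipschitz on bounded sets, each of the products $A(z_a)\overline{B(z_b)}$ and $B(z_a)\overline{A(z_b)}$ differs from its value at $z_a=z_b=\eitheta$ by $O(\sigma_n)$. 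Combined with the vanishing of the bracket at $\sigma_n=0$, this yields \eqref{etas}. The only mild subtlety I anticipate is that $\overline{z_b}$ depends analytically on $\bar{b}$ rather than on $b$, but this enters only through the uniform Lipschitz bound and is indifferent to it.
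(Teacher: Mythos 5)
Your proof is correct, and the collapse to $2\bigl[A(z_a)\overline{B(z_b)}+B(z_a)\overline{A(z_b)}\bigr]$ agrees exactly with the expression the paper denotes (4.2). Where you diverge is in how you establish that the constant term in the Taylor expansion around $\eitheta$ vanishes. The paper does this by citing that $\varphi_p-\varphi_p^*$ and $\psi_p+\psi_p^*$ have all $p$ zeros on the unit circle (a consequence of $|\varphi_p|<|\varphi_p^*|$ on $\bbD$ and the reverse outside), factoring both polynomials explicitly as $\frac{1+\alpha_{p-1}}{r}\prod(z-e^{it_j})$, and then manipulating the conjugated product to exhibit the sign flip. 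You instead work at the level of the degree-$p$ reversal operator: the identities $A^{*}=-A$, $B^{*}=B$ combine with $\overline{f(\eitheta)}=e^{-ip\theta}f^{*}(\eitheta)$ to give the cancellation in two lines, with no need for the zero locations, the leading coefficients, or the product manipulation. Both arguments are correct and yield the same Lipschitz-type conclusion $O(\sigma_n)$; your route is shorter and conceptually cleaner because it makes the anti-self-$*$/self-$*$ structure of $A$ and $B$ the visible mechanism (this is the same structure the paper later invokes when asserting $g_1$ is purely imaginary and $g_2$ purely real in Case~2a). Your remark on replacing $\overline{f(z_b)}$ by $\hat f(\overline{z_b})$ when $z_b$ leaves the circle is a minor technicality the paper passes over, and you handle it correctly.
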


\begin{proof}
If we expand the expressions for $\eta(z;\pm1)$, we find that the expression on the left-hand side of (\ref{etas}) simplifies to
\begin{equation}\label{pairs}
2((\varphi_p(z_a)-\varphi_p^*(z_a))(\overline{\psi_p(z_b)+\psi_p^*(z_b)})+(\overline{\varphi_p(z_b)-\varphi_p^*(z_b)})(\psi_p(z_a)+\psi_p^*(z_a)))
\end{equation}
If we consider the Taylor expansion around $\eitheta$ of each polynomial in this expression, then it suffices to show that all of the constant terms cancel.  The resulting expression will then be $O(|z_a-\eitheta|+|z_b-\eitheta|)=O(\sigma_n)$, which is what we want to show.

To this end, notice that in $\bbD$, it holds that $|\varphi_p(z)|<|\varphi_p^*(z)|$ and the reverse inequality holds outside the closed unit disk.  The same is true for $\psi_p$ and $\psi_p^*$, so we conclude that there are real numbers $\{t_j\}_{j=1}^p$ and $\{s_j\}_{j=1}^p$ such that
\[
\varphi_p(z)-\varphi_p^*(z)=\frac{1+\alpha_{p-1}}{r}\prod_{j=1}^p(z-e^{it_j}),\qquad \psi_p(z)+\psi_p^*(z)=\frac{1+\alpha_{p-1}}{r}\prod_{j=1}^p(z-e^{is_j})
\]
The constant term in the Taylor expansion of \eqref{pairs} is
\[
2((\varphi_p(\eitheta)-\varphi_p^*(\eitheta))(\overline{\psi_p(\eitheta)+\psi_p^*(\eitheta)})+(\overline{\varphi_p(\eitheta)-\varphi_p^*(\eitheta)})(\psi_p(\eitheta)+\psi_p^*(\eitheta)))
\]
We calculate
\begin{align*}
&(\varphi_p(\eitheta)-\varphi_p^*(\eitheta))(\overline{\psi_p(\eitheta)+\psi_p^*(\eitheta)})=\frac{|1+\alpha_{p-1}|^2}{r^2}\prod_{j=1}^p(\eitheta-e^{it_j})(e^{-i\theta}-e^{-is_j})\\
&\qquad\qquad=\frac{|1+\alpha_{p-1}|^2}{r^2}\prod_{j=1}^p(e^{-it_j}-e^{-i\theta})(e^{is_j}-\eitheta)\prod_{j=1}^pe^{it_j}\prod_{j=1}^pe^{-is_j}\\
&\qquad\qquad=\frac{|1+\alpha_{p-1}|^2}{r^2}\prod_{j=1}^p(e^{-i\theta}-e^{-it_j})(\eitheta-e^{is_j})\frac{r(\varphi_p(0)-\varphi_p^*(0))}{1+\alpha_{p-1}}\overline{\left(\frac{r(\psi_p(0)+\psi_p^*(0))}{\alpha_{p-1}+1}\right)}\\
&\qquad\qquad=\frac{|1+\alpha_{p-1}|^2}{r^2}\prod_{j=1}^p(e^{-i\theta}-e^{-it_j})(\eitheta-e^{is_j})\frac{-\bar{\alpha}_{p-1}-1}{1+\alpha_{p-1}}\cdot\frac{1+\alpha_{p-1}}{\bar{\alpha}_{p-1}+1}\\
&\qquad\qquad=-(\overline{\varphi_p(\eitheta)-\varphi_p^*(\eitheta)})(\psi_p(\eitheta)+\psi_p^*(\eitheta))
\end{align*}
so the constant term in (\ref{pairs}) is zero as desired.
\end{proof}

We now consider interior points and edge points separately.

\subsubsection{Case 1a: $\Delta(\eitheta)\in(-2,2)$}\label{case1a}  In the first case, we will assume that $\Delta(\eitheta)\in(-2,2)$ and $\sigma_n=1/n$.  By Taylor expanding $\frac{\varphi_p^*(z)-\varphi_p(z)}{2z^{p/2}}$ around $\eitheta$, we see that for $c=a,b$
\begin{equation}\label{phistar}
\frac{\varphi_p^*(z_c)-\varphi_p(z_c)}{2z_c^{p/2}}=\frac{\varphi_p^*(\eitheta)-\varphi_p(\eitheta)}{2e^{ip\theta/2}}+O(\sigma_n),
\end{equation}
where the error estimate is uniform for $c$ in compact subsets of $\bbC$.  Since $\frac{\varphi_p^*(\eitheta)-\varphi_p(\eitheta)}{2e^{ip\theta/2}}$ is purely imaginary and not zero (we used \cite[Theorem 11.3.1]{OPUC2} here), we can write (\ref{long}) as
\begin{align*}
&(1+o(1))e^{i\frac{(a-\bar{b})}{2}}\left(-\frac{i(a-\bar{b})}{kp-1}\right)^{-1}\times\\
&\quad\bigg(\left[U_{k}\left(\frac{\Delta(z_a)}{2}\right)\overline{U_{k-1}\left(\frac{\Delta(z_b)}{2}\right)}-U_{k-1}\left(\frac{\Delta(z_a)}{2}\right)\overline{U_{k}\left(\frac{\Delta(z_b)}{2}\right)}\right]\frac{\varphi_p(\eitheta)-\varphi_p^*(\eitheta)}{2e^{ip\theta/2}}(1+O(k^{-1}))\\
&\qquad\frac{[\eta(z_a;-1)\overline{\eta(z_b;-1)}-\eta(z_a;1)\overline{\eta(z_b;1)}]U_{k-1}\left(\frac{\Delta(z_a)}{2}\right)\overline{U_{k-1}\left(\frac{\Delta(z_b)}{2}\right)}}{4e^{i\frac{a-\bar{b}}{2k}}}\bigg)
\end{align*}
as $\kri$.  We can apply the calculations from Lemma \ref{atx1} to conclude that the last term in (\ref{long}) is $O(k^{-1})$ as $\kri$ (here we used the fact that $\Delta(\eitheta)\in(-2,2)$).  We conclude that
\begin{align}
\nonumber K_{kp-1}(z_a,z_b;\mu)&=(1+o(1))e^{i\frac{a-\bar{b}}{2}}\frac{kp}{i(\bar{b}-a)}\cdot\frac{\varphi_p(\eitheta)-\varphi_p^*(\eitheta)}{2e^{ip\theta/2}}\times\\
\label{kp}&\quad\left[U_{k}\left(\frac{\Delta(z_a)}{2}\right)\overline{U_{k-1}\left(\frac{\Delta(z_b)}{2}\right)}-U_{k-1}\left(\frac{\Delta(z_a)}{2}\right)\overline{U_{k}\left(\frac{\Delta(z_b)}{2}\right)}\right]
\end{align}
as $\kri$.  If $\nu^*$ is the measure of orthogonality for the polynomials $\{U_n\}_{n\geq0}$, then
\[
K_n(x,y;\nu^*)=\frac{\overline{U_{n}(y)}U_{n+1}(x)-U_{n}(x)\overline{U_{n+1}(y)}}{2(x-\bar{y})}
\]
(see \cite[Section 3]{SimonCD}).  Letting $x=\Delta(z_a)/2$ and $y=\Delta(z_b)/2$, we find
\begin{align*}
&K_{kp-1}(z_a,z_b;\mu)=\\
&\qquad(1+o(1))e^{i\frac{a-\bar{b}}{2}}\frac{kp(\varphi_p(\eitheta)-\varphi_p^*(\eitheta))}{2i(\bar{b}-a)e^{ip\theta/2}}(\Delta(z_a)-\overline{\Delta(z_b)})K_{k-1}\left(\frac{\Delta(z_a)}{2},\frac{\Delta(z_b)}{2};\nu^*\right)
\end{align*}

Let us consider the asymptotics of this expression.  Using the fact that $\Delta(\eitheta)$ is real and $W(\theta)$ is real and non-zero, a Taylor expansion shows that
\[
\lim_{\kri}\frac{kp(\Delta(z_a)-\overline{\Delta(z_b)})}{\bar{b}-a}=-W(\theta).
\]
We can also use \cite[Theorem 3.11.6]{Rice} to conclude that as $\kri$
\begin{align*}
K_{k-1}\left(\frac{\Delta(z_a)}{2},\frac{\Delta(z_b)}{2};\nu^*\right)&= K_{k-1}\left(\frac{\Delta(\eitheta)}{2},\frac{\Delta(\eitheta)}{2};\nu^*\right)\frac{\sin\frac{W(\theta)(\bar{b}-a)}{p\sqrt{4-\Delta(\eitheta)^2}}}{\frac{W(\theta)(\bar{b}-a)}{p\sqrt{4-\Delta(\eitheta)^2}}}+o(k)\\
&=\frac{2kp\,\sin\frac{W(\theta)(\bar{b}-a)}{p\sqrt{4-\Delta(\eitheta)^2}}}{W(\theta)(\bar{b}-a)\sqrt{4-\Delta(\eitheta)^2}}+o(k)
\end{align*}
where we used \cite[Equation 3.11.42]{Rice}.  Therefore,
\[
\lim_{\kri}\frac{K_{kp-1}(z_a,z_b;\mu)}{k}=e^{i\frac{a-\bar{b}}{2}}\frac{p(\varphi_p^*(\eitheta)-\varphi_p(\eitheta))\sin\frac{W(\theta)(a-\bar{b})}{p\sqrt{4-\Delta(\eitheta)^2}}}{ie^{ip\theta/2}(a-\bar{b})\sqrt{4-\Delta(\eitheta)^2}}
\]
By Lemma \ref{normal}, these asymptotics also hold when $a=\bar{b}$.  Taking a limit as $a-\bar{b}\rightarrow0$ shows
\[
\lim_{\kri}\frac{K_{kp-1}(\eitheta,\eitheta;\mu)}{k}=\frac{W(\theta)(\varphi_p^*(\eitheta)-\varphi_p(\eitheta))}{ie^{ip\theta/2}(4-\Delta(\eitheta)^2)}
\]
Thus,
\begin{align}\label{int1a}
\lim_{\kri}\frac{K_{kp-1}(z_a,z_b;\mu)}{K_{kp-1}(\eitheta,\eitheta;\mu)}=e^{i\frac{a-\bar{b}}{2}}\sinc\frac{W(\theta)(a-\bar{b})}{p\sqrt{4-\Delta(\eitheta)^2}}=e^{i\frac{a-\bar{b}}{2}}\sinc(V(\theta)(a-\bar{b}))
\end{align}
(compare with \cite[Equation 1.5]{LeviLub} and \cite[Theorem 1.1]{LubNg}).  Notice that all of the above estimates can be taken uniformly on any compact subset of $\Delta^{-1}((-2,2))$ because of the uniformity of the asymptotics of $K_n(x+a/n,x+b/n;\nu^*)$ for $x$ in compact subsets of $(-1,1)$.

\subsubsection{Case 1b: $\eitheta$ is a nonresonance band edge}\label{case1b}  Now we will consider the case when $\Delta(\eitheta)=2$, $\sigma_n=-1/n^2$, $\eitheta$ is at the edge of a band, and $\varphi_p(\eitheta)\neq\varphi_p^*(\eitheta)$.  According to \cite[Theorem 11.3.2iii]{OPUC2}, the weight of $\mu$ vanishes as a square root on the band edge near $\eitheta$.  As in the previous case, we need to calculate the asymptotics in the expression (\ref{long}), which we rewrite as
\begin{align*}
&(1+o(1))e^{i\frac{(a-\bar{b})k}{2(kp-1)^2}}\left(-\frac{i(a-\bar{b})}{(kp-1)^2}\right)^{-1}\times\\
&\quad\bigg(\left[U_{k}\left(\frac{\Delta(z_a)}{2}\right)\overline{U_{k-1}\left(\frac{\Delta(z_b)}{2}\right)}-U_{k-1}\left(\frac{\Delta(z_a)}{2}\right)\overline{U_{k}\left(\frac{\Delta(z_b)}{2}\right)}\right]\frac{\varphi_p(\eitheta)-\varphi_p^*(\eitheta)}{2e^{ip\theta/2}}(1+O(k^{-2}))\\
&\qquad\frac{[\eta(z_a;-1)\overline{\eta(z_b;-1)}-\eta(z_a;1)\overline{\eta(z_b;1)}]U_{k-1}\left(\frac{\Delta(z_a)}{2}\right)\overline{U_{k-1}\left(\frac{\Delta(z_b)}{2}\right)}}{4e^{i\frac{a-\bar{b}}{2k^2}}}\bigg)
\end{align*}
as $\kri$.  By applying Lemma \ref{cross} and the calculations in the proof of Lemma \ref{atx1}, we conclude that the last term in this expression is $O(1)$ as $\kri$.  It follows that
\begin{align}
\nonumber K_{kp-1}(z_a,z_b;\mu)&=(1+o(1))\frac{(kp)^2}{i(\bar{b}-a)}\cdot\frac{\varphi_p(\eitheta)-\varphi_p^*(\eitheta)}{2e^{ip\theta/2}}\times\\
\label{kp2}&\quad\left[(\Delta(z_a)-\overline{\Delta(z_b)})K_{k-1}\left(\frac{\Delta(z_a)}{2},\frac{\Delta(z_b)}{2};\nu^*\right)+O(1)\right]
\end{align}
as $\kri$, where $\nu^*$ is as in the previous case.

To calculate the asymptotics of this expression, we use \cite[Theorem 1.4]{Danka} to conclude that
\begin{align*}
K_{k-1}\left(\frac{\Delta(z_a)}{2},\frac{\Delta(z_b)}{2};\nu^*\right)&= K_{k-1}\left(1,1;\nu^*\right)\frac{\bbJ^*_{1/2}\left(\frac{W(\theta)a}{p^2},\frac{W(\theta)\bar{b}}{p^2}\right)}{\bbJ_{1/2}^*(0,0)}+o(k^3)\\
&=\frac{k(k+1)(2k+1)\bbJ^*_{1/2}\left(\frac{W(\theta)a}{p^2},\frac{W(\theta)\bar{b}}{p^2}\right)}{6\bbJ_{1/2}^*(0,0)}+o(k^3)
\end{align*}
A Taylor expansion shows $\Delta(z_a)-\overline{\Delta(z_b)}=O(k^{-2})$ as $\kri$, which means the $O(1)$ term in (\ref{kp2}) is negligible as $\kri$.  It also shows that
\[
\lim_{\kri}\frac{(kp)^2(\Delta(z_a)-\overline{\Delta(z_b)})}{\bar{b}-a}=W(\theta).
\]
Therefore,
\[
\lim_{\kri}\frac{K_{kp-1}(z_a,z_b;\mu)}{k^3}=\frac{(\varphi_p^*(\eitheta)-\varphi_p(\eitheta))\bbJ^*_{1/2}\left(\frac{W(\theta)a}{p^2},\frac{W(\theta)\bar{b}}{p^2}\right)}{6ie^{ip\theta/2}\bbJ_{1/2}^*(0,0)}
\]
By Lemma \ref{normal}, these asymptotics also hold when $a=\bar{b}$.  Taking a limit as $a,b\rightarrow0$ shows
\[
\lim_{\kri}\frac{K_{kp-1}(\eitheta,\eitheta;\mu)}{k^3}=\frac{(\varphi_p^*(\eitheta)-\varphi_p(\eitheta))}{6ie^{ip\theta/2}}
\]
Thus,
\begin{align}\label{int1b}
\lim_{\kri}\frac{K_{kp-1}(z_a,z_b;\mu)}{K_{kp-1}(\eitheta,\eitheta;\mu)}=\frac{\bbJ^*_{1/2}\left(\frac{W(\theta)a}{p^2},\frac{W(\theta)\bar{b}}{p^2}\right)}{\bbJ_{1/2}^*(0,0)}
\end{align}
as desired.

\subsection{Case 2: $\eitheta$ is a resonance}\label{res}

In this situation we will not rely on asymptotics of $K_n(\cdot,\cdot;\nu^*)$ and instead rely on more direct calculations.  We begin with a result that tells us how the second kind polynomials behave at a resonance.

\begin{lemma}\label{psiplus}
Suppose $\eitheta$ is a resonance and $\Delta(\eitheta)=2u$ with $u\in\{-1,1\}$.  Then
\begin{equation}\label{psis}
\frac{\psi_p(\eitheta)+\psi^*_p(\eitheta)}{2e^{ip\theta/2}}=u.
\end{equation}
\end{lemma}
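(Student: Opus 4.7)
The strategy rests on the classical Wronskian-type identity for the first- and second-kind orthonormal polynomials,
\[
\varphi_p(z)\psi_p^*(z)+\varphi_p^*(z)\psi_p(z)=2z^p,
\]
which is valid for every $z\in\bbC$ (see, e.g., \cite[Section 3.2]{OPUC1}; one checks it by induction on $p$ using the Szeg\H{o} recursion applied simultaneously to the $\{\alpha_n\}$- and $\{-\alpha_n\}$-sequences, or by computing the determinant of the combined transfer matrix).  I will combine this identity, evaluated at $z=\eitheta$, with the resonance hypothesis $\varphi_p(\eitheta)=\varphi_p^*(\eitheta)$ and the hypothesis $\Delta(\eitheta)=2u$ to deduce \eqref{psis} by pure algebra.

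The first step is to exploit the reality structure on the unit circle.  Since $\eitheta\in\partial\bbD$, the reversal identity gives $\varphi_p^*(\eitheta)=e^{ip\theta}\overline{\varphi_p(\eitheta)}$ and $\psi_p^*(\eitheta)=e^{ip\theta}\overline{\psi_p(\eitheta)}$.  Write
\[
e^{-ip\theta/2}\varphi_p(\eitheta)=A+iB,\qquad e^{-ip\theta/2}\psi_p(\eitheta)=C+iD,
\]
with $A,B,C,D\in\bbR$.  Then $e^{-ip\theta/2}\varphi_p^*(\eitheta)=A-iB$ and $e^{-ip\theta/2}\psi_p^*(\eitheta)=C-iD$, so in particular
\[
\frac{\varphi_p(\eitheta)+\varphi_p^*(\eitheta)}{2e^{ip\theta/2}}=A,\qquad \frac{\psi_p(\eitheta)+\psi_p^*(\eitheta)}{2e^{ip\theta/2}}=C,
\]
and the statement to be proved reduces to $C=u$.

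Two equations now pin $A$ and $C$ down.  The definition of $\Delta$ together with the hypothesis $\Delta(\eitheta)=2u$ gives $A+C=2u$.  The resonance hypothesis forces $B=0$, and multiplying out the Wronskian identity at $z=\eitheta$ yields
\[
e^{ip\theta}\bigl[(A+iB)(C-iD)+(A-iB)(C+iD)\bigr]=2e^{ip\theta},
\]
i.e.\ $2(AC+BD)=2$, which simplifies to $AC=1$.  Hence $A$ and $C$ are the roots of $X^2-2uX+1=0$; since $u\in\{-1,1\}$ the discriminant vanishes and $A=C=u$, giving the lemma.  The only real obstacle is locating and verifying the Wronskian-type identity; once that is in hand the argument is a short, purely algebraic manipulation, and both cases $u=1$ and $u=-1$ are handled uniformly.
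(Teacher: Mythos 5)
Your proof is correct and takes a genuinely different route from the paper's. The paper evaluates $\varphi_{kp}(\eitheta)$ via Theorem \ref{closed} at the resonance, obtaining $e^{ip\theta kp/2}\bigl(k+1 - Ck\bigr)$ where $C$ is the quantity to be determined, and then forces $C=u$ by showing that $\sum_{k\leq N}|\varphi_{kp}(\eitheta)|^2 = O(N)$; establishing that linear bound requires a case split (interior point of a band vs.\ band edge) and, in the edge case, an appeal to the square-root-divergence description of the weight from \cite[Theorem 11.3.2iv]{OPUC2} together with the Christoffel function estimates of Danka--Totik. Your argument instead decomposes $e^{-ip\theta/2}\varphi_p(\eitheta)=A+iB$ and $e^{-ip\theta/2}\psi_p(\eitheta)=C+iD$ using the reflection identity $\varphi_p^*(\eitheta)=e^{ip\theta}\overline{\varphi_p(\eitheta)}$, reads off $A+C=2u$ from $\Delta(\eitheta)=2u$, reads off $B=0$ from the resonance condition, and gets $AC+BD=1$ from the Wronskian identity $\varphi_p\psi_p^*+\varphi_p^*\psi_p=2z^p$ (which is \cite[Eq.\ 3.2.15]{OPUC1}), whence $A,C$ are roots of $X^2-2uX+1=(X-u)^2$. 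This is shorter, purely algebraic, avoids the asymptotic/spectral input entirely, and handles interior and edge resonances uniformly; it also shows as a by-product that $\frac{\varphi_p(\eitheta)+\varphi_p^*(\eitheta)}{2e^{ip\theta/2}}=u$ as well, which the paper's method does not directly produce.
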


\begin{proof}
We present the argument in the case $\Delta(\eitheta)=2$, the other case being very similar.  By Theorem \ref{closed}, we write
\begin{align*}
\varphi_{kp}(\eitheta)&=e^{ip\theta/2}\left(U_k(1)-\frac{\psi_p(\eitheta)+\psi^*_p(\eitheta)}{2e^{ip\theta/2}}U_{k-1}(1)\right)\\
&=e^{ip\theta/2}\left(k+1-\frac{\psi_p(\eitheta)+\psi^*_p(\eitheta)}{2e^{ip\theta/2}}k\right)
\end{align*}
If $\eitheta$ is on the interior of a band in $\supp(\mu)$, then Lemma \ref{normal} shows that
\begin{equation}\label{bigN}
\sum_{k=0}^{N}|\varphi_{kp}(\eitheta)|^2=O(N)
\end{equation}
as $N\rightarrow\infty$.  If $\eitheta$ is at a band edge, then by \cite[Theorem 11.3.2iv]{OPUC2}, we know that the weight of $\mu$ grows like the inverse of the square root of the distance near $\eitheta$ and hence by \cite[Theorem 1.1]{DT} we also know that \eqref{bigN} holds.  By our above formula, the only way this can happen is if \eqref{psis} holds.
\end{proof}

We will also need the following lemma, which is an analog of Lemma \ref{atx1}.

\begin{lemma}\label{atx2}
\begin{itemize}
\item[(a)]  If $\eitheta$ is on the interior of a band in the support of $\mu$ and $\eitheta$ is a resonance, then $|\varphi_n(e^{i(\theta+c/n)})|=O(1)$ as $\nri$ uniformly for $c$ in compact subsets of $\bbC$.
\item[(b)]   If $\eitheta$ is at the edge of a band in the support of $\mu$ and $\eitheta$ is a resonance, then $|\varphi_n(e^{i(\theta+c/n^2)})|=O(1)$ as $\nri$ uniformly for $c$ in compact subsets of $\bbC$.
\end{itemize}
\end{lemma}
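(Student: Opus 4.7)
I plan to follow the strategy of Lemma \ref{atx1}. Theorem \ref{closed} reduces the problem to $n=kp$: since $\varphi_s$ and $\psi_s$ are polynomials of fixed degree less than $p$, they are bounded near $\eitheta$, so it suffices to prove that $|\varphi_{kp}(z_a)|=O(1)$ and $|\varphi_{kp}^*(z_a)|=O(1)$, where $z_a=e^{i(\theta+c/n)}$ in part (a) and $z_a=e^{i(\theta+c/n^2)}$ in part (b).

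Because $\eitheta$ is a resonance, $\varphi_p(\eitheta)=\varphi_p^*(\eitheta)$, and $\Delta(\eitheta)=2u$ with $u\in\{-1,1\}$. By Lemma \ref{psiplus},
\[
\frac{\eta(\eitheta;\pm 1)}{2e^{ip\theta/2}}=-u,
\]
so a first-order Taylor expansion gives $\eta(z_a;\pm 1)/(2z_a^{p/2})=-u+O(|z_a-\eitheta|)$, which is $O(n^{-1})$ in part (a) and $O(n^{-2})$ in part (b).

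The heart of the proof is a Chebyshev cancellation. In part (a), the closed gap $\eitheta$ satisfies $W(\theta)=0$, so $\Delta(z_a)/2=u+O(n^{-2})$; in part (b) the scaling itself ensures $\Delta(z_a)/2=u+O(n^{-2})$. Setting $n=kp$, I write $x_n:=\Delta(z_a)/2=u+\alpha/k^2+O(k^{-3})$ with $\alpha$ bounded uniformly for $c$ in compacta, and use the parity $U_k(x)=u^kU_k(ux)$ to reduce to expansions of $U_k$ and $U_{k-1}$ near $1$. A direct computation with \eqref{chebform2}, setting $y_n=ux_n=\cos(\gamma/k+O(k^{-3}))$ where $\gamma=\sqrt{-2u\alpha}$, and tracking the leading $k\sin\gamma/\gamma$ term, gives
\[
U_k(x_n)-u\,U_{k-1}(x_n)=u^k\cos\gamma+O(1/k),
\]
so the combination is $O(1)$ even though each Chebyshev value is of size $k$.

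Finally, I rewrite the formula of Theorem \ref{closed} as
\[
\varphi_{kp}(z_a)=z_a^{kp/2}\left[\bigl(U_k(x_n)-u\,U_{k-1}(x_n)\bigr)+\left(\frac{\eta(z_a;1)}{2z_a^{p/2}}+u\right)U_{k-1}(x_n)\right].
\]
The first bracket is $O(1)$ by the cancellation step, while the second is bounded by a constant times $O(\sigma_n)\cdot O(k)$, which equals $O(1/p)$ in part (a) and $O(1/(kp^2))$ in part (b); both are $O(1)$. The identical argument with $\eta(z_a;-1)$ in place of $\eta(z_a;1)$ yields $|\varphi_{kp}^*(z_a)|=O(1)$. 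The main obstacle is the Chebyshev cancellation itself: naive bounds give $O(k)$ individually, and one must verify that the $k\sin\gamma/\gamma$ terms in $U_k(x_n)$ and $u\,U_{k-1}(x_n)$ match, together with the next-order corrections (which depend on the third-order coefficient of $x_n-u$), so that only the bounded $\cos\gamma$ piece survives.
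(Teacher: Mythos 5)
Your proof is correct and takes essentially the same approach as the paper: reduce to $n=kp$ via Theorem~\ref{closed}, invoke Lemma~\ref{psiplus} to identify the cancelling combination $U_k - uU_{k-1}$, and use \eqref{chebform2} to show this combination stays $O(1)$ even though each of $U_k$ and $U_{k-1}$ is of size $O(k)$ near $\pm 1$. Your explicit splitting $\varphi_{kp} = z^{kp/2}\bigl[(U_k - uU_{k-1}) + (\eta/(2z^{p/2}) + u)U_{k-1}\bigr]$ and the parity trick $U_k(x) = u^k U_k(ux)$ are cleaner in presentation than the paper's compressed $O$-notation and its ``WLOG $\Delta(\eitheta)=2$'' reduction, but the underlying argument is the same.
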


\begin{proof}
In both cases we may assume without loss of generality that $\Delta(\eitheta)=2$.  Theorem \ref{closed} shows that it suffices to show that the lemma holds when $n$ is a multiple of $p$.

\smallskip

(a)  We use Theorem \ref{closed} and Lemma \ref{psiplus} to deduce that
\begin{align*}
&|\varphi_{kp}(e^{i(\theta+c/(kp))})|=O\left|U_k\left(\frac{\Delta(z_c)}{2}\right)-U_{k-1}\left(\frac{\Delta(z_c)}{2}\right)\right|,
\end{align*}
where $z_c=e^{i(\theta+c/(kp))}$.  A resonance on the interior of a band edge must be a closed gap and as in the proof of Lemma \ref{atx1}, we may assume $\Delta(\eitheta)=2$.  This implies $\Delta'(\eitheta)=0$ and $W'(\theta)<0$ so we can write
\[
\frac{\Delta(z_c)}{2}=1+\frac{W'(\theta)c^2}{4(kp-1)^2}+O(k^{-3})=\cos\left(\frac{c\sqrt{|W'(\theta)|}}{\sqrt{2}(kp-1)}\right)+O(k^{-3}).
\]
as $\kri$.  Thus, if we define $t_k$ so that $\cos(t_k)=\Delta(z_c)/2$, it holds that $t_k=\frac{c\sqrt{|W'(\theta)|}}{\sqrt{2}kp}+o(k^{-1})$ as $\kri$.  Then the formula \eqref{chebform2} implies
\[
U_k\left(\frac{\Delta(z_c)}{2}\right)-U_{k-1}\left(\frac{\Delta(z_c)}{2}\right)=\frac{\sin((k+1)t_k)-\sin(kt_k)}{\sin(t_k)}=\frac{\sin(kt_k)}{\sin(t_k)}(\cos(t_k)-1)+\cos(kt_k)=O(1)
\]
as $\kri$.

\smallskip

(b) The proof is very similar to part (a) except now we set $z_c=e^{i(\theta+c/(kp)^2)}$ and again choose $t_k\in\bbC$ so that $\cos(t_k)=\Delta(z_c)/2$.  Since $\Delta(\eitheta)=2$, we see that $t_k=O(k^{-1})$ as $\kri$.  By using \eqref{chebform2} as in part (a) we find
\[
U_k(\cos(t_k))-U_{k-1}(\cos(t_k))=\frac{\sin((k+1)t_k)-\sin(kt_k)}{\sin(t_k)}=O(1)
\]
as $\kri$.
\end{proof}

By combining Lemma \ref{psiplus} and Lemma \ref{atx2} we obtain the following analog of\ Lemma \ref{normal} to the case of resonances.

\begin{lemma}\label{normal2}
Suppose $\eitheta\in\supp(\mu)$.  If $\eitheta$ is a resonance, then
\[
\left\{\frac{K_n(e^{i(\theta+a/n)},e^{i(\theta+b/n)};\mu)}{n}\right\}_{n\in\bbN}
\]
is a normal family on $\bbC^2$ in the variables $a$ and $b$.
\end{lemma}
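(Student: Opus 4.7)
The plan is to mirror the proof of Lemma \ref{normal}, substituting Lemma \ref{atx2} for Lemma \ref{atx1}. As before, the argument combines a pointwise polynomial bound, Cauchy-Schwarz on the reproducing kernel expansion, and Montel's theorem in two complex variables.

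First I would upgrade Lemma \ref{atx2}(a), which bounds the single polynomial $\varphi_n$, to a uniform bound over all intermediate indices $j \leq n$. Fix a compact set $K \subset \bbC$. For $a \in K$ and $1 \leq j \leq n$, set $c_j := aj/n$, so that $|c_j| \leq \max_{a \in K} |a|$ lies in a single compact subset of $\bbC$. Applying Lemma \ref{atx2}(a) at index $j$ with parameter $c_j$ yields
\[
\bigl|\varphi_j(e^{i(\theta + a/n)})\bigr| = \bigl|\varphi_j(e^{i(\theta + c_j/j)})\bigr| = O(1)
\]
uniformly in $a \in K$, for all $j$ above a threshold depending only on $K$; the finitely many smaller indices are controlled by the continuity of each fixed polynomial $\varphi_j$ in a neighborhood of $\eitheta$. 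Summing squares gives
\[
\sum_{j=0}^n \bigl|\varphi_j(e^{i(\theta + a/n)})\bigr|^2 = O(n)
\]
uniformly for $a \in K$.

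Next, set $z_a := e^{i(\theta + a/n)}$ and $z_b := e^{i(\theta + b/n)}$, and apply Cauchy-Schwarz to the reproducing kernel expansion $K_n(z, w; \mu) = \sum_{j=0}^n \varphi_j(z)\overline{\varphi_j(w)}$ to obtain
\[
|K_n(z_a, z_b; \mu)|^2 \leq \Bigl(\sum_{j=0}^n |\varphi_j(z_a)|^2\Bigr)\Bigl(\sum_{j=0}^n |\varphi_j(z_b)|^2\Bigr) = O(n^2).
\]
Hence $K_n(z_a, z_b; \mu)/n$ is uniformly bounded on compact subsets of $\bbC^2$. Viewing this quantity as an entire function of $(a, b)$ via the standard analytic continuation convention of Section \ref{universality} (in which the antiholomorphic factor $\overline{\varphi_j(z_b)}$ is re-read as the polynomial with conjugated coefficients evaluated at $e^{-i(\theta + b/n)}$, so that joint holomorphy in $(a, b)$ is built in), Montel's theorem in two complex variables then delivers the desired normality.

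The main technical obstacle is the uniform-in-$j$ upgrade of Lemma \ref{atx2}(a) in the first step. Once this is secured, Cauchy-Schwarz together with Montel's theorem closes the argument in a few lines, exactly parallel to the proof of Lemma \ref{normal}.
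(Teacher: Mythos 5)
Your approach matches the paper's: the paper's own ``proof'' is a one-line remark asserting that Lemma~\ref{psiplus} and Lemma~\ref{atx2} combine, exactly as Lemma~\ref{atx1} did for Lemma~\ref{normal}, via Cauchy--Schwarz and Montel. Your uniform-in-$j$ upgrade using the reparametrization $c_j = aj/n$ (so $e^{i(\theta+a/n)} = e^{i(\theta+c_j/j)}$ with $|c_j| \le |a|$) is the correct device to turn the single-polynomial estimate into the diagonal-kernel bound $\sum_{j\le n}|\varphi_j(z_a)|^2 = O(n)$, and your handling of the antiholomorphic slot via the reflected polynomial is the right reading of ``normal family on $\bbC^2$.''

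One point worth flagging, though it is a defect you inherit from the paper's own statement rather than introduce: you only invoke Lemma~\ref{atx2}(a), which covers resonances in the \emph{interior} of a band (closed gaps). If $\eitheta$ is a band-edge resonance, then $\Delta(e^{i(\theta+a/n)}) = 2 - W(\theta)a/n + O(n^{-2})$ with $W(\theta)\neq 0$, so the Chebyshev argument is $1 - O(1/n)$ rather than $1 - O(1/n^2)$, and for complex $a$ one gets $U_k$ of size roughly $\sqrt{k}\,e^{C\sqrt{k}}$; the $1/n$-scaled family is then \emph{not} locally bounded. The correct scaling at a band-edge resonance is $1/n^2$, via Lemma~\ref{atx2}(b) (with the analogous reparametrization $c_j = aj^2/n^2$), which is the scaling Section~\ref{case2b} actually uses. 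So the Lemma really should be stated in two cases, mirroring Lemma~\ref{normal}; your proof is complete and correct for the closed-gap case that the $1/n$ scaling in the statement reflects.
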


Now we proceed as in Case 1 to consider interior points and edge points of the bands separately.

\subsubsection{Case 2a: $\eitheta$ is a closed gap}\label{case2a}  Now we will consider the case when $\Delta(\eitheta)\in\{-2,2\}$ and $W(\theta)=0$.  In other words, the point $\eitheta$ represents a closed gap, so we will let $\sigma_n=1/n$.  Due to the symmetry of the problem, we will confine our attention to the case $\Delta(\eitheta)=2$.  As in the proof of Lemma \ref{atx2}, for $c=a,b$
\[
\frac{\Delta(z_c)}{2}=1+\frac{W'(\theta)c^2}{4(kp-1)^2}+O(k^{-3})
\]
as $\kri$.

Define
\begin{align*}
f_1(z):=\frac{\varphi_p^*(z)-\varphi_p(z)}{2z^{p/2}},&\qquad\qquad g_1(t):=f_1(e^{it})\\
f_2(z):=\frac{\psi_p^*(z)+\psi_p(z)}{2z^{p/2}},&\qquad\qquad g_2(t):=f_2(e^{it})\
\end{align*}
We know that $g_1:\bbR\rightarrow i\bbR$ and $g_2:\bbR\rightarrow\bbR$, so the same must be true for $g_1'$ and $g_2'$ respectively.  Using these formulas, we can use \eqref{chebcos} to estimate
\begin{align*}
&\frac{\varphi_{kp}(z_a)}{e^{i\theta kp/2}}=e^{ikpa/(2kp-2)}\bigg[U_{k}\left(1+\frac{W'(\theta)a^2}{4(kp-1)^2}+O(k^{-3})\right)\\
&\qquad\qquad-\left(g_1\left(\theta+\frac{a}{kp-1}\right)+g_2\left(\theta+\frac{a}{kp-1}\right)\right)U_{k-1}\left(1+\frac{W'(\theta)a^2}{4(kp-1)^2}+O(k^{-3})\right)\bigg]\\
&= e^{ia/2}\left[U_{k}\left(\cos\left(\frac{a\sqrt{|W'(\theta)|}}{\sqrt{2}(kp-1)}\right)\right)-U_{k-1}\left(\cos\left(\frac{a\sqrt{|W'(\theta)|}}{\sqrt{2}(kp-1)}\right)\right)\left(1+\frac{(g_1'(\theta)+g_2'(\theta))a}{kp-1}\right)\right]+o(1)\\
&= e^{ia/2}\left(\cos\left(\frac{a\sqrt{|W'(\theta)|}}{p\sqrt{2}}\right)-\frac{\sin\left(\frac{a\sqrt{|W'(\theta)|}}{p\sqrt{2}}\right)}{\sin\left(\frac{a\sqrt{|W'(\theta)|}}{\sqrt{2}(kp-1)}\right)}\cdot\frac{(g_1'(\theta)+g_2'(\theta))a}{kp-1}\right)+o(1)
\end{align*}
as $\kri$.  A similar analysis of $\varphi_{kp}^*(z_a)$ shows that as $\kri$
\begin{align*}
\frac{\varphi_{kp}(z_a)}{e^{i\theta kp/2}}&= e^{ia/2}\left(\cos\left(\frac{a\sqrt{|W'(\theta)|}}{p\sqrt{2}}\right)-\frac{\sqrt{2}\sin\left(\frac{a\sqrt{|W'(\theta)|}}{p\sqrt{2}}\right)(g_1'(\theta)+g_2'(\theta))}{\sqrt{|W'(\theta)|}}\right)+o(1)\\
\frac{\varphi_{kp}^*(z_a)}{e^{i\theta kp/2}}&= e^{ia/2}\left(\cos\left(\frac{a\sqrt{|W'(\theta)|}}{p\sqrt{2}}\right)-\frac{\sqrt{2}\sin\left(\frac{a\sqrt{|W'(\theta)|}}{p\sqrt{2}}\right)(g_2'(\theta)-g_1'(\theta))}{\sqrt{|W'(\theta)|}}\right)+o(1)
\end{align*}
Thus, if $a\neq\bar{b}$, then the Christoffel Darboux formula implies
\begin{align*}
&\lim_{\kri}\frac{K_{kp-1}(z_a,z_b;\mu)}{k}\\
&\,=\frac{2\sqrt{2}g_1'(\theta)pe^{i\frac{a-\bar{b}}{2}}}{i(\bar{b}-a)\sqrt{|W'(\theta)|}}\left(\cos\left(\frac{\bar{b}\sqrt{|W'(\theta)|}}{p\sqrt{2}}\right)\sin\left(\frac{a\sqrt{|W'(\theta)|}}{p\sqrt{2}}\right)-\cos\left(\frac{a\sqrt{|W'(\theta)|}}{p\sqrt{2}}\right)\sin\left(\frac{\bar{b}\sqrt{|W'(\theta)|}}{p\sqrt{2}}\right)\right)\\
&\,=\frac{2\sqrt{2}g_1'(\theta)pe^{i\frac{a-\bar{b}}{2}}}{i(\bar{b}-a)\sqrt{|W'(\theta)|}}\sin\left(\frac{(a-\bar{b})\sqrt{|W'(\theta)|}}{p\sqrt{2}}\right)
\end{align*}
By Lemma \ref{normal2}, these same asymptotics hold when $a=\bar{b}$, so taking $a-\bar{b}\rightarrow0$ shows
\begin{align*}
&\lim_{\kri}\frac{K_{kp-1}(\eitheta,\eitheta;\mu)}{k}=2ig_1'(\theta)
\end{align*}
In particular, we see that $g_1'(\theta)\neq0$.  We conclude that
\begin{align*}
\lim_{\kri}\frac{K_{kp-1}(z_a,z_b;\mu)}{K_{kp-1}(\eitheta,\eitheta;\mu)}=e^{i\frac{a-\bar{b}}{2}}\frac{\sin\left(\frac{(a-\bar{b})\sqrt{|W'(\theta)|}}{p\sqrt{2}}\right)}{\frac{(a-\bar{b})\sqrt{|W'(\theta)|}}{p\sqrt{2}}}=e^{i\frac{a-\bar{b}}{2}}\sinc\left(\frac{(a-\bar{b})\sqrt{|W'(\theta)|}}{p\sqrt{2}}\right)
\end{align*}
An application of L'H\^{o}pital's Rule to the formula for $V(\theta)$ shows that we can rewrite this as
\begin{align}\label{int2a}
\lim_{\kri}\frac{K_{kp-1}(z_a,z_b;\mu)}{K_{kp-1}(\eitheta,\eitheta;\mu)}=e^{i\frac{a-\bar{b}}{2}}\sinc\left(V(\theta)(a-\bar{b})\right)
\end{align}
as in (\ref{int1a}).

\subsubsection{Case 2b: $\eitheta$ is a band edge}\label{case2b}

As above, we will consider only the case $\Delta(\eitheta)=2$.  We will carry out an analysis similar to that in Case 2a but with $\sigma_n=-1/n^2$.  If we define $t_k$ so that $\cos\left(\frac{t_k}{k}\right)=\Delta(z_a)/2$, then
\begin{align*}
\frac{\varphi_{kp}(z_a)}{e^{i\theta kp/2}}&=\left(1-\frac{iakp}{2(kp-1)^2}+O(k^{-2})\right)\\
&\qquad\times\left[\frac{\sin(t_k)}{\sin\left(\frac{t_k}{k}\right)}(\cos\left(\frac{t_k}{k}\right)-1)+\cos(t_k)+\frac{\sin(t_k)a(g_2'(\theta)+g_1'(\theta))}{\sin\left(\frac{t_k}{k}\right)(kp-1)^2}+O(k^{-3})\right]\\
\frac{\varphi_{kp}^*(z_a)}{e^{i\theta kp/2}}&=\left(1-\frac{iakp}{2(kp-1)^2}+O(k^{-2})\right)\\
&\qquad\times\left[\frac{\sin(t_k)}{\sin\left(\frac{t_k}{k}\right)}(\cos\left(\frac{t_k}{k}\right)-1)+\cos(t_k)+\frac{\sin(t_k)a(g_2'(\theta)-g_1'(\theta))}{\sin\left(\frac{t_k}{k}\right)(kp-1)^2}+O(k^{-3})\right]
\end{align*}
Notice that
\[
\sum_{m=0}^{\infty}(-1)^m\frac{t_k^{2m}}{k^{2m}(2m)!}=1-\frac{W(\theta)a}{2(kp-1)^2}+O(k^{-4}),\qquad\qquad\kri
\]
and so
\[
\frac{W(\theta)a}{(kp-1)^2}=\frac{t_k^2}{k^2}+O(k^{-4}),\qquad\qquad\kri,
\]
which means $t_k=\frac{\sqrt{aW(\theta)}}{p}+O(k^{-1})$ as $\kri$ for an appropriate choice of the square root.  Therefore,
\begin{align*}
\frac{\varphi_{kp}(z_a)}{e^{i\theta kp/2}}&=\cos\left(\frac{\sqrt{aW(\theta)}}{p}\right)+O(k^{-2})\\
&\qquad+\frac{1}{k}\left[\frac{-ia\cos\left(\frac{\sqrt{aW(\theta)}}{p}\right)}{2p}-\frac{\sqrt{aW(\theta)}}{p}\sin\left(\frac{\sqrt{aW(\theta)}}{p}\right)\left(\frac{1}{2}-\frac{g_1'(\theta)+g_2'(\theta)}{W(\theta)}\right)\right]\\
\frac{\varphi_{kp}^*(z_a)}{e^{i\theta kp/2}}&=\cos\left(\frac{\sqrt{aW(\theta)}}{p}\right)+O(k^{-2})\\
&\qquad+\frac{1}{k}\left[\frac{-ia\cos\left(\frac{\sqrt{aW(\theta)}}{p}\right)}{2p}-\frac{\sqrt{aW(\theta)}}{p}\sin\left(\frac{\sqrt{aW(\theta)}}{p}\right)\left(\frac{1}{2}-\frac{g_2'(\theta)-g_1'(\theta)}{W(\theta)}\right)\right]\\
\end{align*}
as $\kri$.  A similar formula holds at $z_b$ so we can now use the Christoffel-Darboux formula to calculate
\begin{align*}
&\lim_{\kri}\frac{K_{kp-1}(z_a,z_b;\mu)}{k}\\
&\,=\frac{2pg_1'(\theta)}{i(\bar{b}-a)W(\theta)}\\
&\quad\times\left(\sqrt{aW(\theta)}\cos\left(\frac{\sqrt{\overline{b}W(\theta)}}{p}\right)\sin\left(\frac{\sqrt{aW(\theta)}}{p}\right)-\sqrt{\bar{b}W(\theta)}\cos\left(\frac{\sqrt{aW(\theta)}}{p}\right)\sin\left(\frac{\sqrt{\overline{b}W(\theta)}}{p}\right)\right)\\
&\,=2p\pi ig_1'(\theta)\bbJ_{-1/2}^*\left(\frac{W(\theta)a}{p^2},\frac{W(\theta)\bar{b}}{p^2}\right)
\end{align*}
where we used the fact that
\[
J_{-1/2}(z)=\frac{\sqrt{2}\cos(z)}{\sqrt{\pi z}}
\]
(see \cite[page 17]{Korenev}).  By Lemma \ref{normal2}, these same asymptotics hold when $a=\bar{b}$.  Setting $a=b=0$, we find
\begin{align}\label{int2b}
\lim_{\kri}\frac{K_{kp-1}(z_a,z_b;\mu)}{K_{kp-1}(\eitheta,\eitheta;\mu)}=\frac{\bbJ_{-1/2}^*\left(\frac{W(\theta)a}{p^2},\frac{W(\theta)\bar{b}}{p^2}\right)}{\bbJ_{-1/2}^*\left(0,0\right)}
\end{align}
as desired.

\medskip

Combining (\ref{int1a}), (\ref{int1b}), (\ref{int2a}), and (\ref{int2b}) proves Theorem \ref{univthm} in the case $n=kp-1$ and $\mu^*=\mu$.  To pass from the subsequence $\{kp-1\}_{k\in\bbN}$ to the general case, we need the following lemma.

\begin{lemma}\label{reduce}
Suppose $\mu$ is as in the statement of Theorem \ref{univthm}.  Fix $\eitheta$ in a band of $\supp(\mu)$.  If $\Delta(\eitheta)\in(-2,2)$, set $\sigma_n=1/n$, while if $\Delta(\eitheta)\in\{-2,2\}$, set $\sigma_n=-1/n^2$.  Then uniformly for $a$ and $b$ in compact subsets of $\bbC$ and $s\in\{1,\ldots,p-1\}$ it holds that
\[
\lim_{\kri}\left|\frac{K_{kp-1+s}(e^{i(\theta+a\sigma_{kp-1+s})},e^{i(\theta+b\sigma_{kp-1+s})};\mu)}{K_{kp-1+s}(e^{i\theta},e^{i\theta};\mu)}- \frac{K_{kp-1}(e^{i(\theta+a\sigma_{kp-1})},e^{i(\theta+b\sigma_{kp-1})};\mu)}{K_{kp-1}(e^{i\theta},e^{i\theta};\mu)}\right|=0
\]
as $\kri$.
\end{lemma}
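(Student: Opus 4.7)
The plan is to write $K_{kp-1+s}=K_{kp-1}+R_k$, where $R_k$ is a short tail of at most $p-1$ polynomial products, and then to argue that $R_k$ is negligible compared with $K_{kp-1}(\eitheta,\eitheta;\mu)$ and that the small discrepancy between $\sigma_{kp-1+s}$ and $\sigma_{kp-1}$ in the arguments does not affect the limiting ratio.

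First I would note that the definition of the reproducing kernel gives
\[
R_k(z,w):=K_{kp-1+s}(z,w;\mu)-K_{kp-1}(z,w;\mu)=\sum_{j=kp}^{kp-1+s}\varphi_j(z)\overline{\varphi_j(w)}.
\]
For $z=e^{i(\theta+a\sigma_n)}$ and $w=e^{i(\theta+b\sigma_n)}$ with $n\in\{kp-1,kp-1+s\}$, I would apply Lemma \ref{atx1} to obtain $|\varphi_j(z)|=O(1)$ at a bulk nonresonance point and $|\varphi_j(z)|=O(j)$ at a nonresonance band edge, and apply Lemma \ref{atx2} to obtain $|\varphi_j(z)|=O(1)$ at bulk resonance points and at resonance band edges. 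For each $j$ in the summation range one may rewrite $z$ as $e^{i(\theta+c_j/j)}$ or $e^{i(\theta+c_j/j^2)}$ with $c_j$ lying in a compact set, since $j$ is comparable to $n$. Consequently $R_k(z,w)=O(1)$ in three of the four cases and $R_k(z,w)=O(k^2)$ in the nonresonance band-edge case, uniformly in $(a,b)$ on compact subsets of $\bbC^2$.

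Comparing these bounds with the orders of growth of $K_{kp-1}(\eitheta,\eitheta;\mu)$ computed in Sections \ref{case1a}--\ref{case2b}, namely $k$ in the bulk and resonance-edge cases and $k^3$ in the nonresonance-edge case, I conclude that $R_k(z,w)/K_{kp-1}(\eitheta,\eitheta;\mu)\to 0$ as $\kri$ in every case. In particular $K_{kp-1+s}(\eitheta,\eitheta;\mu)/K_{kp-1}(\eitheta,\eitheta;\mu)\to 1$, so the two denominators appearing in the lemma are interchangeable in the limit, and replacing $K_{kp-1+s}$ by $K_{kp-1}$ in the numerator introduces only a vanishing error. The remaining discrepancy between evaluating $K_{kp-1}$ at $\sigma_{kp-1+s}$ versus $\sigma_{kp-1}$ would be handled using the uniform convergence on compact subsets of $\bbC^2$ established during Sections \ref{case1a}--\ref{case2b}, together with the normal-family properties in Lemmas \ref{normal} and \ref{normal2}, since $\sigma_{kp-1+s}/\sigma_{kp-1}\to 1$ and the functions in question are uniformly continuous in $(a,b)$ on compacta.

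The main obstacle will be the nonresonance band-edge case, where the tail bound $R_k=O(k^2)$ is only a single factor of $k$ smaller than the denominator $K_{kp-1}(\eitheta,\eitheta;\mu)=O(k^3)$; the other three cases enjoy a much larger margin. What saves the argument in this tight case is that Lemma \ref{atx1}(b) delivers the Markov-type optimal growth rate for $\varphi_j$ at a band edge, just barely enough to make the ratio $R_k/K_{kp-1}(\eitheta,\eitheta;\mu)=O(k^{-1})$ vanish.
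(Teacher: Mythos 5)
Your proof is correct and follows essentially the same route as the paper: decompose $K_{kp-1+s}-K_{kp-1}$ as a tail sum of at most $p-1$ products, bound each term via Lemmas \ref{atx1} and \ref{atx2}, compare against the known growth order of $K_{kp-1}(e^{i\theta},e^{i\theta};\mu)$ case by case, and absorb the $\sigma_{kp-1+s}$-versus-$\sigma_{kp-1}$ shift using the uniform convergence established in Sections \ref{case1a}--\ref{case2b}. The paper is slightly terser about the last step (``the calculations of this section'') but the argument is the same.
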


\begin{proof}
First we will show that
\begin{align}\label{srat}
\lim_{\kri}\frac{K_{kp-1+s}(e^{i\theta},e^{i\theta};\mu)}{K_{kp-1}(e^{i\theta},e^{i\theta};\mu)}=1.
\end{align}
Then we will show that
\begin{align}\label{sumrat}
\lim_{\kri}\frac{\sum_{j=kp}^{kp+s-1}\varphi_j(e^{i(\theta+a\sigma_{kp+s-1})})\overline{\varphi_j(e^{i(\theta+b\sigma_{kp+s-1})})}}{K_{kp-1}(e^{i\theta},e^{i\theta};\mu)}=0.
\end{align}
uniformly for $a$ and $b$ in compact subsets of $\bbC$ and uniformly in $s\in\{1,\ldots,p-1\}$.  These two limits and the calculations of this section combine to give us the desired result.  Notice that (\ref{srat}) is (\ref{sumrat}) in the case $a=b=0$, so it suffices to prove (\ref{sumrat}).

Since $\mu(\{\eitheta\})=0$, we know that $K_{kp-1}(\eitheta,\eitheta;\mu)\rightarrow\infty$ as $\kri$.  If $\Delta(\eitheta)\in(-2,2)$, then Lemmas \ref{atx1} and \ref{atx2} shows that $\{|\varphi_j(e^{i(\theta+c\sigma_{j})})|\}_{j\in\bbN}$ is bounded uniformly in $j\in\bbN$ and uniformly in $c$ in compact subsets of $\bbC$.  This implies (\ref{sumrat}).  The same reasoning applies if $|\Delta(\eitheta)|=2$ and $\eitheta$ is a resonance.  If $|\Delta(\eitheta)|=2$ and $\eitheta$ is not a resonance, then Lemma \ref{atx1} shows that $|\varphi_j(e^{i(\theta+c\sigma_{j})})|=O(j)$ as $j\rightarrow\infty$ uniformly in $c$ in compact subsets of $\bbC$, while $K_{kp-1}(\eitheta,\eitheta;\mu)$ grows like a constant times $k^3$ as $\kri$ (see also \cite[Theorem 1.1]{DT}).  Taken together, this implies (\ref{sumrat}) in this case.
\end{proof}

Lemma \ref{reduce} and the calculations preceding it give us a complete proof of Theorem \ref{univthm} in the case $\mu^*=\mu$.  The general case now follows standard perturbative methods as in \cite{LeviLub,LubNg,SimUniv,SimGeron}.  Due to the well-known method of proof, we provide only a sketch of the ideas.  The main idea is to rely on a result of Bourgade, which is \cite[Theorem 3.10]{Bourgade} and tells us that under the assumptions of Theorem \ref{univthm} it holds that
\[
\lim_{\nri}\left|\frac{w(\theta)K_{n}(e^{i(\theta+a\sigma_n)},e^{i(\theta+b\sigma_n)};\mu^*)-K_{n}(e^{i(\theta+a\sigma_n)},e^{i(\theta+b\sigma_n)};\mu)}{K_{n}(e^{i(\theta+a\sigma_n)},e^{i(\theta+a\sigma_n)};\mu^*)}\right|=0
\]
uniformly for $a$ and $b$ in appropriate compact intervals in the real line.  Some technicalities remain to show that this is the desired conclusion.

The key remaining part of the proof of Theorem \ref{univthm} is to show that
\[
\lim_{\nri}\frac{K_n(e^{i(\theta-a\sigma_n)},e^{i(\theta-a\sigma_n)};\mu)}{K_n(e^{i(\theta-a\sigma_n)},e^{i(\theta-a\sigma_n)};\mu^*)}=w(\theta)
\]
and the convergence is uniform for $a$ in compact subsets of $\bbC$.  Proving this is where we use the fact that the measure $\mu^*$ is regular and involves interpreting the diagonal reproducing kernel in terms of Christoffel functions and a localization argument.  The details appear in many places, such as \cite{LeviLub,MNT,SimUniv}, so we omit the lengthy calculations.  Due to the possible presence of mass points in the gaps of $\supp(\mu_{ac})$, we mention that the Erd\H{o}s-Tur\'{a}n criterion (see \cite[page 101]{StaTo}) and \cite[Theorem 11.3.2]{OPUC2} imply that not only is $\mu^*$ regular, but so is its restriction to $\supp(\mu_{ac})$.  Thus \cite[Theorem 3.2.3v]{StaTo} is applicable to this restricted measure.

Part of the calculations that allow us to complete the proof of Theorem \ref{univthm} require knowing that the limiting kernel is strictly positive along the diagonal.  This is given to us by the following result, which is an analog of \cite[Proposition 4.2]{SimGeron}.

\begin{prop}\label{jminus}
The function $\bbJ_{-1/2}^*(t,\bar{t})$ is non-vanishing as a function of $t\in\bbC$.
\end{prop}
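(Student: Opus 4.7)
The plan is to derive an elementary closed form for $\bbJ_{-1/2}^*(a,b)$ using the half-integer Bessel identities $J_{-1/2}(z)=\sqrt{2/(\pi z)}\cos z$ and $J_{1/2}(z)=\sqrt{2/(\pi z)}\sin z$, and then to read off the non-vanishing of its restriction to the anti-diagonal $b=\bar t$ by direct inspection.

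First I would substitute these explicit forms into the off-diagonal definition of $\bbJ_{-1/2}^*(a,b)$, handling $\sqrt{b}\,J_{-1/2}'(\sqrt{b})$ via the standard Bessel recurrence $J_s'(z)=(s/z)J_s(z)-J_{s+1}(z)$ at $s=-1/2$. The factor $a^{-1/4}b^{-1/4}$ in the denominator cancels the fractional powers produced by the $\sqrt{\,\cdot\,}$'s in the numerator, and after invoking the product-to-sum identity $\sin X\cos Y=\tfrac{1}{2}[\sin(X+Y)+\sin(X-Y)]$ with $X=\sqrt{a}$, $Y=\sqrt{b}$ (and the roles reversed), the numerator telescopes. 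Writing $u=\sqrt{a}$, $v=\sqrt{b}$, this should yield
\[
\bbJ_{-1/2}^*(a,b)\;=\;\frac{1}{2\pi}\bigl[\sinc(u+v)+\sinc(u-v)\bigr].
\]
This is manifestly independent of the choice of square roots because $\sinc$ is even, and I would briefly verify consistency with the $a=b$ case of the defining formula using the recurrence $J_{-3/2}(z)=-\frac{1}{z}J_{-1/2}(z)-J_{1/2}(z)$.

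Next I would specialize to $b=\bar{t}$, choose the square roots so that $\sqrt{\bar t}=\overline{\sqrt t}$, and write $\sqrt{t}=x+iy$ with $x,y\in\bbR$. Then $\sqrt{a}+\sqrt{b}=2x$ and $\sqrt{a}-\sqrt{b}=2iy$, and the elementary identity $\sinc(2iy)=\sinh(2y)/(2y)$ converts the closed form into the real expression
\[
\bbJ_{-1/2}^*(t,\bar{t})\;=\;\frac{1}{2\pi}\left[\frac{\sin(2x)}{2x}+\frac{\sinh(2y)}{2y}\right],
\]
with the usual continuous value $1$ at $x=0$ or $y=0$. Strict positivity then follows at once from two real-variable facts: the Taylor series of $\sinh(r)/r$ has nonnegative coefficients so $\sinh(2y)/(2y)\geq 1$, while $|\sin r|\leq |r|$ with equality only at $r=0$ (where the ratio equals $+1$) forces $\sin(2x)/(2x)>-1$ for every real $x$. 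Adding the two bounds produces a strictly positive bracket, and hence $\bbJ_{-1/2}^*(t,\bar{t})>0$ for every $t\in\bbC$.

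The hard part will be the first step: keeping careful track of branch choices and of the fractional powers of $a$ and $b$ so as to recognize that the whole expression reorganizes as the sum of two $\sinc$ kernels. Once that identity is in hand, the restriction to $b=\bar t$ and the positivity bound are immediate, giving actually a stronger conclusion (positivity, not just non-vanishing) than the proposition requires.
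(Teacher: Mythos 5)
Your proof is correct and establishes strict positivity, which is stronger than the non-vanishing claimed. It is also a genuinely different route from the paper's. The paper computes the same elementary closed form for $\bbJ_{-1/2}^*$, checks positivity on the diagonal only for real $t$, and then handles non-real $t$ by contradiction: if $\bbJ_{-1/2}^*(t,\bar t)$ vanished for $t\notin\bbR$, the universality limit at the resonance band edge (case 2b) would make the normalized reproducing kernel $K_n(z_t,z_t;\mu)/K_n(\eitheta,\eitheta;\mu)$ tend to $0$, and Cauchy--Schwarz would then force the off-diagonal limit $\bbJ_{-1/2}^*(t,0)/\bbJ_{-1/2}^*(0,0)=\sinc(\sqrt{t})$ to vanish as well, which pins $t$ to the real line. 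That argument is short but imports the whole machinery of the universality theorem. Your reorganization of the closed form as $\tfrac{1}{2\pi}\bigl[\sinc(\sqrt{a}+\sqrt{b})+\sinc(\sqrt{a}-\sqrt{b})\bigr]$ — which I verified does agree with the paper's formula $\bigl(\sqrt{a}\sin\sqrt{a}\cos\sqrt{b}-\sqrt{b}\sin\sqrt{b}\cos\sqrt{a}\bigr)/\bigl(\pi(a-b)\bigr)$ and is manifestly a well-defined entire function of $(a,b)$ since $\sinc$ is even — is entirely self-contained: on $b=\bar t$ it separates into the real quantity $\tfrac{1}{2\pi}\bigl[\sinc(2x)+\sinh(2y)/(2y)\bigr]$, and the elementary bounds $\sinh(2y)/(2y)\geq 1$ and $\sin(2x)/(2x)>-1$ give positivity immediately. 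The trade-off is that your argument is purely real-variable and needs nothing from the rest of the paper, whereas the paper's Cauchy--Schwarz argument avoids the (admittedly mild) bookkeeping of branches and the product-to-sum simplification.
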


\begin{proof}
First note that for $t\in\bbR$ it holds that
\[
\bbJ_{-1/2}^*(t,t)=\frac{1}{4\pi}\left(2+\frac{\sin(2\sqrt{t})}{\sqrt{t}}\right)>0,
\]
which verifies the claim in this case.  By using the formula for $J_{-1/2}$ mentioned earlier, one can check that if $a\neq b$
\[
\bbJ_{-1/2}^*(a,b)=\frac{\sqrt{a}\sin(\sqrt{a})\cos(\sqrt{b})-\sqrt{b}\sin(\sqrt{b})\cos(\sqrt{a})}{\pi(a-b)}
\]
If $\bbJ_{-1/2}^*(t,\bar{t})=0$ for $t\not\in\bbR$, then for $\theta$ as in Case 2b above, we would have
\[
\lim_{\nri}\frac{K_n(e^{i(\theta-p^2t/(W(\theta)n^2))},e^{i(\theta-p^2t/(W(\theta)n^2))},\mu)}{K_n(\eitheta,\eitheta,\mu)}=0.
\]
By the Cauchy-Schwarz inequality, this would imply
\[
\lim_{\nri}\frac{K_n(e^{i(\theta-p^2t/(W(\theta)n^2))},\eitheta,\mu)}{K_n(\eitheta,\eitheta,\mu)}=0.
\]
However, we also know this limit is equal to $\bbJ_{-1/2}^*(t,0)/\bbJ_{-1/2}^*(0,0)=\sin(\sqrt{t})/\sqrt{t}$.  This implies $t\in\bbR$, which gives us a contradiction.
\end{proof}

With these tools in hand, one will have no trouble adapting the method used in \cite{LeviLub,LubNg,SimUniv,SimGeron} to complete the proof of Theorem \ref{univthm}.

\section{Simple Applications of Theorem \ref{closed}}\label{simple}

In this section, we will use Theorem \ref{closed} to find short proofs of some formulas and results, some of which are new.  

\smallskip

As a first application, we recall that in \cite[Section 5.1]{dJP} a new derivation of \cite[Theorem 2.1]{Simon3} is given.  Using Theorem \ref{closed}, we can provide a new proof of the unit circle version, which was originally given as \cite[Theorem 3.1]{Simon3}.

\begin{corollary}\label{new31}
Under the assumptions of Theorem \ref{closed}, the zeros of $\Phi_{kp}(z)-\Phi_{kp}^*(z)$ are the resonances and the preimages of the zeros of $U_{k-1}(x)$ under the map $\Delta(z)/2$.
\end{corollary}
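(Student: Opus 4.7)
The plan is to derive a compact factorization of $\Phi_{kp}(z)-\Phi_{kp}^*(z)$ directly from Theorem \ref{closed}. Since $\varphi_n = \Phi_n/\|\Phi_n\|$ with $\|\Phi_n\|>0$, the polynomial $\varphi_{kp}-\varphi_{kp}^*$ has the same zero set as $\Phi_{kp}-\Phi_{kp}^*$, so it suffices to analyze the normalized version. Subtracting the two formulas for $\varphi_{kp}$ and $\varphi_{kp}^*$ in Theorem \ref{closed} gives
\[
\varphi_{kp}(z)-\varphi_{kp}^*(z) = z^{kp/2}\cdot\frac{\eta(z;1)-\eta(z;-1)}{2z^{p/2}}\,U_{k-1}\!\left(\frac{\Delta(z)}{2}\right).
\]
From the definition $\eta(z;\sigma)=\sigma(\varphi_p(z)-\varphi_p^*(z))-\psi_p(z)-\psi_p^*(z)$ one reads off $\eta(z;1)-\eta(z;-1)=2(\varphi_p(z)-\varphi_p^*(z))$, and hence
\[
\varphi_{kp}(z)-\varphi_{kp}^*(z) = z^{(k-1)p/2}\,(\varphi_p(z)-\varphi_p^*(z))\,U_{k-1}\!\left(\frac{\Delta(z)}{2}\right).
\]

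With this identity in hand I would then simply read off the zeros. The resonances are by definition the zeros of $\varphi_p-\varphi_p^*$, and the preimages of the zeros of $U_{k-1}$ under $\Delta/2$ are exactly the zeros of $U_{k-1}(\Delta(z)/2)$; each family clearly contributes zeros of the right-hand side, and conversely any zero of the right-hand side (other than a potential zero at $z=0$ from the prefactor) must come from one of these two factors.

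The one point that requires care is the $z^{(k-1)p/2}$ prefactor, which naively might introduce a spurious zero at the origin. The resolution is that $\Delta(z)$ has a pole of order $p/2$ at $z=0$ coming from the denominator $2z^{p/2}$ in \eqref{deltadef}, with a nonzero leading coefficient since $\varphi_p(0)+\varphi_p^*(0)+\psi_p(0)+\psi_p^*(0)=(2+\alpha_{p-1}-\bar\alpha_{p-1})/r\ne 0$. Thus $U_{k-1}(\Delta(z)/2)$ has a pole of order exactly $(k-1)p/2$ at the origin, which is precisely canceled by the $z^{(k-1)p/2}$ prefactor; the resulting polynomial identity is consistent with $\Phi_{kp}(0)-\Phi_{kp}^*(0)=-(1+\bar\alpha_{p-1})\ne 0$, so $z=0$ is not a zero and no extraneous contribution appears.

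The steps are essentially algebraic once the formulas of Theorem \ref{closed} are available, so there is no substantial obstacle; the main item requiring attention is the bookkeeping of the $z^{(k-1)p/2}$ factor against the pole of $U_{k-1}(\Delta/2)$ at the origin. Everything else is direct manipulation and invocation of the paper's own definition of resonance.
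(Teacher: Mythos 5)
Your derivation is exactly the intended one: subtracting the two formulas of Theorem \ref{closed} gives the factorization
\[
\varphi_{kp}(z)-\varphi_{kp}^*(z)=z^{(k-1)p/2}\,(\varphi_p(z)-\varphi_p^*(z))\,U_{k-1}\!\left(\frac{\Delta(z)}{2}\right),
\]
and since the paper states this corollary with no proof --- presenting it as an immediate consequence of Theorem \ref{closed} --- this is the same route. One small slip in your side remark: $\Psi_p(0)=+\bar\alpha_{p-1}$ (the recursion uses $-\alpha_n$, so $\Psi_p(0)=-\overline{(-\alpha_{p-1})}$), and hence $\varphi_p(0)+\varphi_p^*(0)+\psi_p(0)+\psi_p^*(0)=2/r$ rather than $(2+\alpha_{p-1}-\bar\alpha_{p-1})/r$; fortunately both expressions are nonzero, so the cancellation of the $z^{(k-1)p/2}$ prefactor against the pole of $U_{k-1}(\Delta/2)$ at the origin goes through exactly as you describe and the argument is unaffected.
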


\noindent\textit{Remark.} Recall that resonances are the zeros of $\Phi_p(z)-\Phi_p^*(z)$.

\smallskip

\noindent\textit{Remark.}  Since $U_n$ is either odd or even (depending on the parity of $n$), we do not need to assume that $p$ is even in Corollary \ref{new31}.

\smallskip

As a second application of Theorem \ref{closed} and inspired by the work in \cite{Cantero}, let us consider the generating function of this sequence of orthonormal polynomials.  To state our results more succinctly, we define
\begin{align*}
g(z,t)&:=(\varphi_p(z)-\varphi_p^*(z))\sum_{s=0}^{p-1}t^s\psi_s(z)-(\psi_p(z)+\psi_p^*(z))\sum_{s=0}^{p-1}t^s\varphi_s(z)\\
\nu(z,t;\lambda)&:=\sum_{s=0}^{p-1}t^s(\varphi_s(z)+\lambda\psi_s(z))
\end{align*}

\begin{corollary}\label{genfunc}
The polynomials $\{\varphi_n(z)\}_{n=0}^{\infty}$ satisfy
\begin{align*}
\sum_{n=0}^{\infty}\varphi_n(z)t^n&=\frac{2\nu(z,t;0)+t^pg(z,t)}{2(1-\Delta(z)z^{p/2}t^p+z^pt^{2p})}\\
\end{align*}
whenever this series converges.
\end{corollary}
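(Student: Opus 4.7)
The plan is to split the series by residue class modulo $p$: write
\[
\sum_{n=0}^{\infty}\varphi_n(z)t^n=\sum_{s=0}^{p-1}t^s\sum_{k=0}^{\infty}\varphi_{kp+s}(z)t^{kp}.
\]
The formula for $\varphi_{kp+s}$ in Theorem \ref{closed} holds trivially for $s=0$ as well (since $\varphi_0=\psi_0=1$), so for every $s\in\{0,1,\ldots,p-1\}$ each inner sum equals
\[
\tfrac{1}{2}\bigl[(\varphi_s+\psi_s)F(z,t)+(\varphi_s-\psi_s)F^*(z,t)\bigr],
\]
where $F(z,t)=\sum_{k\ge 0}\varphi_{kp}(z)t^{kp}$ and $F^*(z,t)=\sum_{k\ge 0}\varphi_{kp}^*(z)t^{kp}$. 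Thus the whole problem reduces to finding closed forms for $F$ and $F^*$.

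To compute $F$ and $F^*$, I would insert the closed forms from Theorem \ref{closed} and apply the standard generating function
\[
\sum_{k=0}^{\infty}U_k(x)y^k=\frac{1}{1-2xy+y^2},\qquad \sum_{k=0}^{\infty}U_{k-1}(x)y^k=\frac{y}{1-2xy+y^2},
\]
with $x=\Delta(z)/2$ and $y=z^{p/2}t^p$, so that the common denominator becomes exactly $1-\Delta(z)z^{p/2}t^p+z^pt^{2p}$. A short calculation then gives
\[
F(z,t)=\frac{2+\eta(z;1)t^p}{2(1-\Delta(z)z^{p/2}t^p+z^pt^{2p})},\qquad F^*(z,t)=\frac{2+\eta(z;-1)t^p}{2(1-\Delta(z)z^{p/2}t^p+z^pt^{2p})}.
\]

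Finally, I would substitute these expressions into the outer sum and simplify. Writing $A=\varphi_p(z)-\varphi_p^*(z)$ and $B=\psi_p(z)+\psi_p^*(z)$ so that $\eta(z;\pm 1)=\pm A-B$, the bracket $(\varphi_s+\psi_s)(2+\eta(z;1)t^p)+(\varphi_s-\psi_s)(2+\eta(z;-1)t^p)$ collapses to
\[
4\varphi_s+2t^p\bigl[\psi_s A-\varphi_s B\bigr].
\]
Summing this against $t^s$ for $s=0,\ldots,p-1$ produces exactly $2\nu(z,t;0)+t^p g(z,t)$ in the numerator, which matches the claimed formula.

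The only obstacle worth flagging is the algebraic cancellation in the step where $(\varphi_s+\psi_s)\eta(z;1)+(\varphi_s-\psi_s)\eta(z;-1)$ simplifies to $2\psi_s A-2\varphi_s B$; once this identity is noticed, recognizing $g(z,t)$ is immediate. Convergence of the resulting series is assumed (the statement qualifies it by ``whenever this series converges''), so no analytic subtleties arise, and there is no need to assume $p$ is even since $z^{p/2}$ only enters in the combination $z^{p/2}t^p$ whose square $z^pt^{2p}$ also appears, with both sides of the identity being rational in this setup.
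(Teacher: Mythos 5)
Your proof is correct and follows essentially the same route as the paper: split the sum by residue class mod $p$, apply Theorem \ref{closed} to reduce to the two generating functions $F=\sum_k\varphi_{kp}t^{kp}$ and $F^*=\sum_k\varphi_{kp}^*t^{kp}$, evaluate those with the Chebyshev generating function, and then simplify. The paper writes the residue-class decomposition directly in the compact form $\sum_n\varphi_n t^n=\tfrac{1}{2}\nu(z,t;1)F+\tfrac{1}{2}\nu(z,t;-1)F^*$, but that is the same bookkeeping you carry out with $A$ and $B$, and your algebraic cancellation matches the paper's use of $\nu(z,t;1)+\nu(z,t;-1)=2\nu(z,t;0)$ together with the definition of $g$.
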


\begin{proof}
Notice that Theorem \ref{closed} implies
\begin{align}\label{uform}
\sum_{n=0}^{\infty}\varphi_n(z)t^n&=\frac{\nu(z,t;1)}{2}\sum_{k=0}^{\infty}t^{kp}\varphi_{kp}(z)+\frac{\nu(z,t;-1)}{2}\sum_{k=0}^{\infty}t^{kp}\varphi_{kp}^*(z)
\end{align}
Now we again use Theorem \ref{closed} and the fact that
\[
\sum_{n=0}^{\infty}U_n(x)t^n=\frac{1}{1-2xt+t^2}
\]
whenever this series converges (see \cite[Equation 4.5.23]{Ibook}).  This shows
\[
\sum_{k=0}^{\infty}t^{kp}\varphi_{kp}(z)=\frac{2+\eta(z;1)t^p}{2-2\Delta(z)z^{p/2}t^p+2z^pt^{2p}}
\]
and
\[
\sum_{k=0}^{\infty}t^{kp}\varphi_{kp}^*(z)=\frac{2+\eta(z;-1)t^p}{2-2\Delta(z)z^{p/2}t^p+2z^pt^{2p}}
\]
Now we plug this into (\ref{uform}) and use the fact that $\nu(z,t;1)+\nu(z,t;-1)=2\nu(z,t;0)$ and the definition of $g(z,t)$ to deduce the desired formula.
\end{proof}

We can also use Theorem \ref{closed} to derive identities satisfied by the Chebyshev polynomials $U_n$.  Indeed, a sequence of periodic Verblunsky coefficients with period $p$ is also a sequence of periodic Verblunsky coefficients with period $np$ for any $n\in\bbN$.  Therefore, we can use the formulas in Theorem \ref{closed} with different choices of periods to derive identities for the polynomials $\{U_n\}_{n\geq0}$.

\begin{corollary}\label{newu}
Let $\{\alpha_n\}_{n=0}^{\infty}$ be a sequence of periodic Verblunsky coefficients with period $p$.  Let $\Delta_j(z)$ be defined as $\Delta(z)$ in (\ref{deltadef}) but with each $p$ replaced by $j$ and similarly define $\eta_j(z;\sigma)$ as in Section \ref{periodic} but with each $p$ replaced by $j$.  Then for any $k,m\in\bbN$ it holds that
\[
U_k\left(\frac{\Delta_{mp}(z)}{2}\right)+\frac{\eta_{mp}(z;1)}{2z^{mp/2}}U_{k-1}\left(\frac{\Delta_{mp}(z)}{2}\right)=U_{mk}\left(\frac{\Delta_{p}(z)}{2}\right)+\frac{\eta_{p}(z;1)}{2z^{p/2}}U_{mk-1}\left(\frac{\Delta_{p}(z)}{2}\right)
\]
\end{corollary}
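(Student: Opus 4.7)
The strategy is exactly the one hinted at in the paragraph preceding the corollary: exploit the fact that a period-$p$ sequence is also a period-$mp$ sequence, and apply the first formula of Theorem \ref{closed} in two different ways to the same orthonormal polynomial $\varphi_{mkp}(z)$.

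Concretely, the plan is as follows. First, apply Theorem \ref{closed} with period $p$ and index $mk$ (so that $mk$ plays the role of $k$ in the statement of the theorem). This yields
\[
\varphi_{mkp}(z)=z^{mkp/2}\left[U_{mk}\!\left(\frac{\Delta_p(z)}{2}\right)+\frac{\eta_p(z;1)}{2z^{p/2}}\,U_{mk-1}\!\left(\frac{\Delta_p(z)}{2}\right)\right].
\]
Next, regard $\{\alpha_n\}_{n=0}^{\infty}$ as a periodic sequence with period $mp$ and apply Theorem \ref{closed} again with period $mp$ and index $k$. Since the sequence and hence the orthonormal polynomial $\varphi_{mkp}$ is unchanged, this produces
\[
\varphi_{mkp}(z)=z^{mkp/2}\left[U_{k}\!\left(\frac{\Delta_{mp}(z)}{2}\right)+\frac{\eta_{mp}(z;1)}{2z^{mp/2}}\,U_{k-1}\!\left(\frac{\Delta_{mp}(z)}{2}\right)\right].
\]

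Equating the two expressions for $\varphi_{mkp}(z)$ and cancelling the common factor $z^{mkp/2}$ yields the claimed identity. There is really no obstacle here: the only thing to verify is that Theorem \ref{closed} applies equally well for the period $mp$, which is immediate since the hypothesis of periodicity of period $mp$ is automatic from periodicity of period $p$, and $\varphi_{mkp}$ depends only on the Verblunsky coefficients, not on which period we declare. Note that the prefactor $r$ appearing in the proof of Theorem \ref{closed} is different for the two periods, but $r$ does not appear in the final formula of Theorem \ref{closed} itself, so no compatibility between the two values of $r$ needs to be checked.
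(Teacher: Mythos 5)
Your proof is correct and is exactly the argument the paper has in mind: the paragraph preceding Corollary \ref{newu} explicitly says to apply Theorem \ref{closed} with period $p$ and with period $mp$ to the same polynomial and compare. The observation that $r$ does not appear in the final formula, so no normalization compatibility needs checking, is a sensible sanity check but not a substantive addition.
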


For example, if we consider the sequence $\{0,\alpha,0,\alpha,0,\ldots\}$, then applying Corollary \ref{newu} with $m=2$ shows that for any $k\in\bbN$ and any $\alpha\in\bbD$ it holds that
\begin{align*}
&U_k\left(\frac{z^4+2|\alpha|^2z^2+1}{2z^2(1-|\alpha|^2)}\right)-\frac{2(z^2(\,|\alpha|^2+\bar{\alpha})+\bar{\alpha}+1)}{2z^{2}(1-|\alpha|^2)}U_{k-1}\left(\frac{z^4+2|\alpha|^2z^2+1}{2z^2(1-|\alpha|^2)}\right)\\
&\qquad\qquad=U_{2k}\left(\frac{z^2+1}{2z\sqrt{1-|\alpha|^2}}\right)-\frac{1+\bar{\alpha}}{z\sqrt{1-|\alpha|^2}}U_{2k-1}\left(\frac{z^2+1}{2z\sqrt{1-|\alpha|^2}}\right)
\end{align*}

As an additional application of Theorem \ref{closed}, we can derive a formula for the Carath\'{e}odory function of the measure of orthogonality.  Such an expression is given in \cite[Equation 11.3.15]{OPUC2} and \cite[Section 2]{PehSt}, but we will give a different formula.   By \cite[Theorem 1]{SimRat}, we know that
\begin{equation}\label{urat}
\lim_{\nri}\frac{U_{n+1}(x)}{U_n(x)}=x+\sqrt{x^2-1},\qquad x\not\in[-1,1].
\end{equation}
Therefore, we have
\begin{align}
\nonumber F(z)=\lim_{k\rightarrow\infty}\frac{\psi_{kp}^*(z)}{\varphi_{kp}^*(z)}&=\frac{2z^{p/2}\Gamma_+(z)+\psi_p^*(z)-\psi_p(z)-\varphi_p(z)-\varphi_p^*(z)}{2z^{p/2}\Gamma_+(z)+\varphi_p^*(z)-\varphi_p(z)-\psi_p(z)-\psi_p^*(z)}\\
\label{fform}&=1+\frac{2(\psi_p^*(z)-\varphi_p^*(z))}{2z^{p/2}\Gamma_+(z)+\varphi_p^*(z)-\varphi_p(z)-\psi_p(z)-\psi_p^*(z)}
\end{align}

We can also use Theorem \ref{closed} to find the Schur function for the measure of orthogonality.  To do so, we follow the method of \cite{SimGeron} and first find convenient formulas for the Wall polynomials for this measure.  Recall that the Wall polynomials are a pair of polynomial sequences $\{A_n,B_n\}_{n=0}^{\infty}$ such that $A_n/B_n$ converges to the Schur function $f$ associated with $\mu$ uniformly on compact subsets of the unit disk $\bbD$.  The Pint\'{e}r-Nevai formulas (see \cite[Theorem 3.2.10]{OPUC1} or \cite{PN}) tell us that
\begin{align*}
A_n(z)&=\frac{\Psi_{n+1}^*(z;\mu)-\Phi_{n+1}^*(z;\mu)}{2z}\\
B_n(z)&=\frac{\Psi_{n+1}^*(z;\mu)+\Phi_{n+1}^*(z;\mu)}{2}
\end{align*}
Since we are free to take the index to infinity through any subsequence, we will only write explicit formulas for the Wall polynomials for indices in a certain subsequence.  Formulas for the other indices can be obtained easily.

\begin{corollary}\label{wall}
For all $k\in\bbN$, the Wall polynomials $A_{kp-1}$ and $B_{kp-1}$ for the measure $\mu$ are given by
\begin{align*}
A_{kp-1}(z)&=\frac{r^{k}z^{(k-1)p/2-1}U_{k-1}(\Delta(z)/2)}{2}(\psi_p^*(z)-\varphi_p^*(z))\\
B_{kp-1}(z)&=r^{k}z^{kp/2}\left[U_k(\Delta(z)/2)-\frac{U_{k-1}(\Delta(z)/2)}{2z^{p/2}}(\psi_p(z)+\varphi_p(z))\right]
\end{align*}
\end{corollary}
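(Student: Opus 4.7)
The plan is to start from the Pint\'{e}r-Nevai formulas
\[
A_{kp-1}(z)=\frac{\Psi_{kp}^*(z)-\Phi_{kp}^*(z)}{2z},\qquad B_{kp-1}(z)=\frac{\Psi_{kp}^*(z)+\Phi_{kp}^*(z)}{2},
\]
and reduce everything to known expressions for $\varphi_{kp}^*$ and $\psi_{kp}^*$. Since $\|\Phi_n\|^2=\prod_{j=0}^{n-1}(1-|\alpha_j|^2)$ and the same normalization holds for the second-kind polynomials, we get $\Phi_{kp}^*(z)=r^k\varphi_{kp}^*(z)$ and $\Psi_{kp}^*(z)=r^k\psi_{kp}^*(z)$, because $r^k$ is real. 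Thus it suffices to compute $\psi_{kp}^*(z)\pm\varphi_{kp}^*(z)$ and multiply by $r^k$.

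Theorem \ref{closed} gives a closed form for $\varphi_{kp}^*$ directly. For $\psi_{kp}^*$ I would exploit the fact that the normalized second-kind polynomials $\{\psi_n\}$ are the orthonormal polynomials obtained by iterating the Szeg\H{o} recursion with the sequence $\{-\alpha_n\}$ in place of $\{\alpha_n\}$. Applying Theorem \ref{closed} to this flipped sequence, one observes two things: first, because the discriminant
\[
\Delta(z)=\frac{\varphi_p(z)+\varphi_p^*(z)+\psi_p(z)+\psi_p^*(z)}{2z^{p/2}}
\]
is manifestly symmetric under the interchange $\varphi\leftrightarrow\psi$, the discriminant for the flipped sequence equals the original $\Delta$; second, the quantity $\eta$ becomes $\tilde{\eta}(z;\sigma)=\sigma(\psi_p(z)-\psi_p^*(z))-\varphi_p(z)-\varphi_p^*(z)$. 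Substituting into Theorem \ref{closed} yields
\[
\psi_{kp}^*(z)=z^{kp/2}\left[U_k\!\left(\tfrac{\Delta(z)}{2}\right)+\frac{\psi_p^*(z)-\psi_p(z)-\varphi_p(z)-\varphi_p^*(z)}{2z^{p/2}}\,U_{k-1}\!\left(\tfrac{\Delta(z)}{2}\right)\right].
\]

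Now the algebra is straightforward. Subtracting the formula for $\varphi_{kp}^*$ (with $\eta(z;-1)=\varphi_p^*(z)-\varphi_p(z)-\psi_p(z)-\psi_p^*(z)$) from that for $\psi_{kp}^*$, the $U_k$ terms cancel, and the coefficient of $U_{k-1}$ simplifies to $(\psi_p^*-\varphi_p^*)/z^{p/2}$, giving
\[
\psi_{kp}^*(z)-\varphi_{kp}^*(z)=z^{(k-1)p/2}(\psi_p^*(z)-\varphi_p^*(z))\,U_{k-1}\!\left(\tfrac{\Delta(z)}{2}\right),
\]
from which the stated formula for $A_{kp-1}$ follows after dividing by $2z$ and multiplying by $r^k$. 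Adding the two formulas instead, the terms involving $\psi_p^*$ and $\varphi_p^*$ cancel in the $U_{k-1}$ coefficient, leaving only $-(\psi_p+\varphi_p)/z^{p/2}$, which produces the claimed formula for $B_{kp-1}$.

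The only real obstacle is the symmetry argument that justifies the formula for $\psi_{kp}^*$ from Theorem \ref{closed}: one must check carefully that passing from $\{\alpha_n\}$ to $\{-\alpha_n\}$ does indeed swap the roles of $\varphi_p,\varphi_p^*$ with $\psi_p,\psi_p^*$ while preserving $\Delta$, and that the factor $r$ (which only depends on $|\alpha_j|$) is unchanged. Once this is in place, the remaining work is the short cancellation computation described above.
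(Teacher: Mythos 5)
Your proof is correct and fills in exactly the details behind the paper's one-line argument ``immediate consequence of Theorem \ref{closed} and the Pint\'{e}r--Nevai formulas'': the monic-to-orthonormal conversion $\Phi_{kp}^*=r^k\varphi_{kp}^*$, $\Psi_{kp}^*=r^k\psi_{kp}^*$, the symmetry observation that flipping $\{\alpha_n\}\to\{-\alpha_n\}$ swaps $\varphi\leftrightarrow\psi$ while fixing $\Delta$ and $r$, and the resulting cancellations. This is essentially the same approach as the paper.
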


\begin{proof}
This is an immediate consequence of Theorem \ref{closed} and the Pint\'{e}r-Nevai formulas.
\end{proof}

We can apply Corollary \ref{wall} and send $k\rightarrow\infty$ to find the Schur function for the measure $\mu$.  If we apply Corollary \ref{wall} and \eqref{urat} with $x=\Delta(z)/2$ we conclude
\[
f(z)=\lim_{k\rightarrow\infty}\frac{A_{kp-1}(z)}{B_{kp-1}(z)}=\frac{\psi_p^*-\varphi_p^*}{2\Gamma_+(z)z^{p/2+1}-z(\psi_p(z)+\varphi_p(z))},\qquad\qquad |z|<1
\]

\smallskip

As a final application, we note that according to the results in \cite{SimShift}, each of the following limits should exist for $z$ in the appropriate domain:
\[
\lim_{k\rightarrow0}\frac{\varphi_{kp+s}(z)}{\varphi_{kp+s+1}(z)},\qquad\qquad s\in\{0,\ldots,p-1\}
\]
Another application of Theorem \ref{closed} (or Theorem \ref{jdef}) allows us to make these limits explicit (see also \cite[Corollary 3.5]{PehSt2}).



\end{document}